\theoremstyle{definition}
\newtheorem{thm}{Theorem}[section]
\newtheorem{cor}[thm]{Corollary}
\newtheorem{lemma}[thm]{Lemma}
\newtheorem{definition}{Definition}
\numberwithin{equation}{section}
\newcommand{\R}{\mathbb{R}}
\newcommand{\p}{\partial}
\newcommand{\ol}{\overline}
\newcommand{\Trg}[2]{~\mbox{Tr}_{#2}\left(#1\right)}
\newcommand{\tr}[1]{~\mbox{tr}\left(#1\right)}
\renewcommand{\l}{\left}
\renewcommand{\r}{\right}
\renewcommand{\to}{\rightarrow}
\newcommand{\To}{\Rightarrow}
\newcommand{\M}{\mathcal{M}}
\newcommand{\N}{\mathcal{N}}
\renewcommand{\exp}[1]{~\mbox{exp}\left(#1\right)}
\newcommand{\e}{\epsilon}
\renewcommand{\d}{\delta}
\newcommand{\F}{\mathcal{F}}
\newcommand{\g}{\gamma}
\renewcommand{\p}{\frac{1}{2}}
\newcommand{\wt}{\widetilde}
\title[Optimal Riemannian metrics and an ergodic theorem in NPC space]{Optimal Riemannian metric for a volumorphism and a mean ergodic theorem in complete global Alexandrov nonpositively curved spaces}
\author{Tony Liimatainen}
\address{Department of Mathematics and Systems Analysis, Aalto University, P.O. Box 11100 FI-00076 Aalto Finland, +358 9 470 23044}
\email{tony.liimatainen@aalto.fi}
\date{February 14, 2012}
\begin{document}
\begin{abstract}
In this paper we give a natural condition for when a volumorphism on a Riemannian manifold $(M,g)$ is actually an isometry with respect to some other, optimal, Riemannian metric $h$. We consider the natural action of volumorphisms on the space $\M_\mu^s$ of all Riemannian metrics of Sobolev class $H^s$, $s>n/2$, with a fixed volume form $\mu$. An optimal Riemannian metric, for a given volumorphism, is a fixed point of this action in a certain complete metric space containing $\M_\mu^s$ as an isometrically embedded subset. We show that a fixed point exists if the orbit of the action is bounded. We also generalize a mean ergodic theorem and a fixed point theorem to the nonlinear setting of complete global Alexandrov nonpositive curvature spaces.
\end{abstract}
\subjclass[2010]{Primary 58D17; Secondary 53C23, 58B20}
\maketitle

\section{Introduction}
In this paper we consider the problem of finding an invariant Riemannian metric for a mapping preserving a given volume form on a Riemannian manifold. The setup of the problem is the following.

Let $M$ be a smooth, closed and oriented finite dimensional manifold. Then the set of all Riemannian metrics $\M$ can be considered as an infinite dimensional manifold. Its tangent vectors can be given an $L^2$ inner product~\cite{Ebin, Clarke} and thus computing the curvature of $\M$ makes sense. The sectional curvature of $\M$ is nonpositive, but $\M$ is not geodesically complete~\cite{Freed, Clarke}.

Instead of $\M$ we consider a submanifold $\M_\mu$ of $\M$ consisting of Riemannian metrics which all have the same given volume form $\mu$. The submanifold $\M_\mu$ is an infinite dimensional globally symmetric space. The exponential mapping is a diffeomorphism, it has nonpositive curvature and any two points can be joined by a unique geodesic. For details of these statements, see~\cite{Ebin, Freed, Clarke}.

Diffeomorphisms preserving a given volume form are called volumorphisms. They act naturally by pullback on $\M_\mu$ and the action is isometric. Let $g$ be some Riemannian metric on $M$ whose induced volume form is $\mu$ and $\phi$ a volumorphism of $M$. Tangent spheres defined by $g$ are mapped to tangent ellipsoids by the pushforward of $\phi$. The sphere and its image under the pushforward of $\phi$, the ellipsoid, have the same volume due to the fact that $\phi$ is a volumorphism, but we have no control on how much the sphere is distorted.

We ask the following question. If the tangent spheres are boundedly distorted under the iterations of the pushforward of $\phi$, is there another Riemannian metric, say $h$, such that tangent spheres defined by $h$ are mapped to spheres? Since in this case the mapping $\phi$ considered as a mapping $(M,h)\to (M,h)$ would be an isometry, the Riemannian metric $h$ is in this sense an optimal Riemannian metric for $\phi$. A similar idea has been used in the study of quasiconformal mappings in~\cite{Tukia, Iwaniec}.

We formalise the idea explained above and ask: ``If the action of a volumorphism on $\M_\mu$ has a bounded orbit, is there a fixed point of this action?''. We consider the space of Riemannian metrics of Sobolev class $H^s$ and assume that the mapping $\phi$ is of Sobolev class $H^{s+1}$, $s>n/2$. We show that the answer to the question above is affirmative if we allow the possibility that the fixed point is of lower regularity than $H^s$. 

A fixed point of the action of $\phi$ on $\M_\mu^s$, if it exists, will belong to a metric space $(X,\d)$ of $\mu$-a.e. positive definite symmetric $(0,2)$-tensor fields with volume form agreeing with $\mu$ a.e. The elements of $X$ are also assumed to satisfy a certain natural integrability condition. As a metric space, $(X,\d)$ is a complete global Alexandrov nonpositive curvature space containing $(\M_\mu^s,d)$ as an isometrically embedded subset. Here $d$ is the distance metric induced by the weak Riemannian metric on $\M_\mu^s$. The space $(X,\d)$ is defined in Theorem~\ref{X}, and Theorem~\ref{vol_fp} is the fixed point theorem for the action of volumorphisms on $\M_\mu^s$. We expect $(X,\d)$ to be the metric completion of $(\M_\mu^s,d)$.

To find a fixed point for the action of a volumorphism on $\M_\mu$, we generalize a mean ergodic theorem and a fixed point theorem to suit our nonlinear setting. These are Theorems~\ref{main_thm} and~\ref{fp_thm}. Mean ergodic theorems consider the convergence of averages of the iterates of the points under the action. In a nonlinear setting there is no obvious notion of average, but on nonpositively curved spaces, such as $\M_\mu^s$, there is a natural generalization of averages~\cite{Jost, Jostb, Karcher}. Averages are also called means or centers of mass in the literature.

The class of metric spaces where we formulate our mean ergodic theorem and fixed point theorem is that of complete global Alexandrov nonpositive curvature spaces, which are also known as (complete) $\mbox{CAT}(0)$ spaces. Alexandrov nonpositive curvature spaces have been studied in a setting similar to ours in~\cite{Jost, Jostb}. In both of the above-mentioned theorems, we assume that the mapping considered is nonexpansive. The mean ergodic theorem additionally assumes that the means of the iterates of the mapping satisfy a certain convexity property.

\section{Mean ergodic theorem and fixed point theorem in a global Alexandrov nonpositive curvature space}
In this section, we formulate and prove a mean ergodic theorem and a fixed point theorem in a nonlinear setting. First we give a brief review of the class of global Alexandrov nonpositive curvature (NPC) spaces we are working on. For details and examples of Alexandrov NPC spaces we refer to~\cite{Jost,Jostb}.

A metric space $(\N,d)$ is said to be a geodesic length space if for any two points $p,q\in N$ there exists a rectifiable curve $\g:[0,1]\to \N$ with $\g(0)=p$ and $\g(1)=q$ and length equal to $d(p,q)$. Such a curve is called a \emph{geodesic}.

\begin{definition}
 A geodesic length space $(\N,d)$ is said to be a global Alexandrov nonpositive curvature (NPC) space if for any three points $p,q,r$ of $\N$ and any geodesic $\g:[0,1]\to \N$ with $\g(0)=p$ and $\g(1)=r$, we have for $0\leq t\leq 1$
\begin{equation*}
 d^2(q,\g(t))\leq (1-t) d^2(q,\g(0))+td^2(q,\g(1))-t(1-t)l(\g)^2.
\end{equation*}
Here $l(\g)$ is the length of the geodesic $\g$. 
\end{definition}
The inequality above is called the Alexandrov NPC inequality. We remark that global Alexandrov NPC spaces are simply connected and that for any two given points there is a unique geodesic connecting the points. See Lemma 2.2.1 of~\cite{Jostb} and the discussion that follows the lemma.

We will also need the concept of convex sets in geodesic length spaces. A subset of a geodesic length space is \emph{convex} if any two points of the subset can be joined by a geodesic whose image is contained in that subset. The convex hull $co(S)$ of a subset $S$ of a geodesic length space is the smallest convex subset of $\N$ containing $S$. 

The convex hull of an arbitrary subset of a geodesic length space need not exist, but global Alexandrov NPC spaces have the property that the convex hull of any set exists~\cite[Lemma 3.3.1]{Jostb}. By that same lemma, we can express the convex hull of a subset $S\subset \N$ as follows. Set $C_0=S$ and define $C_k$ to be the union of all geodesic arcs between points of $C_{k-1}$. We have
\begin{equation}\label{cohull}
 co(S)=\bigcup_{k=0}^\infty C_k.
\end{equation}

We record for future reference that the diameter of a set $S$ and its convex hull $co(S)$ are the same. It holds trivially that $\mbox{diam}(co(S))\geq \mbox{diam}(S)$. Let $\e>0$ and choose $p,q\in co(S)$ such that $d(p,q)+\e=\mbox{diam}(co(S))$. The Alexandrov NPC inequality implies that
\begin{equation}
 d^2(q,\g(t))\leq \max\{d^2(q,\g(0)),d^2(q,\g(1))\}
\end{equation}
for any $q\in \N$, $\g:[0,1]\to\N$ a geodesic and $t\in [0,1]$. By this inequality, it can be seen from the definition of the sets $C_k$ that 
\begin{equation*}
 d^2(p,q)\leq \max_{i\in I}\{d^2(p_i,q_i)\},
\end{equation*}
where $I$ is a finite index set and $p_i,q_i\in C_0=S$. It follows that $d(p,q)$ is bounded from above by $\mbox{diam}(S)$. Thus we also have $\mbox{diam}(S)\geq \mbox{diam}(co(S))$. 

Mean ergodic theorems, in general, are convergence theorems for means of iterates of points under a given action to a limit invariant under the action. We need the concept of mean to proceed. The mean in a vector space is just the arithmetic average of the vectors, but the concept of mean generalizes to many other spaces as follows.

\begin{definition}[Mean]\label{mean_map}
 Let $S=\{S_0,S_1,\ldots, S_{n-1}\}$ be a finite subset of a metric space $\N$ with a metric $d$. The \emph{mean function} of $S$ is the function $F_S$ on $\N$ given by
\begin{equation*}
 F_S(p)=\frac{1}{n}\sum_{i=0}^{n-1}d^2(p,S_i).
\end{equation*}
If there exists a unique minimizer of $F_S$, then we denote 
\begin{equation*}
m(S)=\mbox{ the unique minimizer of } F_S
\end{equation*}
and call $m(S)$ the \emph{mean} of $S$.
\end{definition}
In a complete global Alexandrov NPC space the unique minimizer for $F_S$ exists for all finite subsets $S$ of $\N$ and belongs to the closure $\ol{co}(S)$ of $co(S)$; see Theorem 3.2.1 and Lemma 3.3.4 of~\cite{Jostb}. Thus, in this case, we can also consider the mean as a mapping $S\to \ol{co}(S)$.

We are mainly interested in the means of iterations of points by a given mapping $T$. In this case, we denote the mean function of $n$ iterates of $p\in \N$ by
\begin{equation}\label{F_n}
 F_n(r,p)=\frac{1}{n}\sum_{i=0}^{n-1}d^2(r,T^ip)
\end{equation}
and the mean of $n$ iterates of $p$ is denoted by
\begin{equation*}
 m_n(p)=\mbox{ the unique minimizer of } F_n(\cdot,p).
\end{equation*}

An important class of mappings on $(\N,d)$ we are going to consider is that of nonexpansive mappings. A mapping $T$ from a metric space $(\N,d)$ to itself is called \emph{nonexpansive} if
\begin{equation*}
 d(Tp,Tq)\leq d(p,q)
\end{equation*}
for all $p,q\in \N$. Thus, the class of nonexpansive mappings contains not only contractions, but also isometries.

Convexity plays a crucial role in the formulation and proof of our main theorem. A function $F:\N\to \R$ is said to be convex if for every geodesic $\g:[0,1]\to \N$ the function $F\circ \g:[0,1]\to\R$ is convex. We say that a mapping $T:\N\to \N$ is \emph{distance convex} if for all $n \in \mathbb{N}$ and $q\in \N$ the mapping 
\begin{equation}\label{T_convex}
 d^2(m_n(\cdot),q):\N\to \R^+
\end{equation}
is convex. In a normed vector space, any linear mapping is distance convex, yet this definition seems to be new. However, the proof of our main theorem naturally employs the definition, which suggests that the class of distance convex mappings might be of further interest.

We are now ready to state our main theorem.
\begin{thm}[Mean Ergodic Theorem]\label{main_thm} Let $(\N,d)$ be a complete global Alexandrov NPC space and $T:\N\to\N$ a nonexpansive distance convex mapping. Then, for any $p\in \N$ whose orbit is bounded, and any $q\in \N$, the following are equivalent:
\begin{align*}
  (i) \qquad &Tq=q \mbox{ and } q\in \overline{co}\{p,Tp,T^2p,\ldots\}, \\
 (ii) \qquad  &q=\lim_{n}m_n(p), \\
 (iii) \qquad &q=\mbox{w-}\lim_n m_n(p), \\
  (iv) \qquad &q \mbox{ is a weak cluster point of the sequence } (m_n(p)).
\end{align*}
\end{thm}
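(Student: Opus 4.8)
The plan is to prove the cycle of implications $(i)\Rightarrow(ii)\Rightarrow(iii)\Rightarrow(iv)\Rightarrow(i)$, with weak convergence/weak cluster points understood in the sense appropriate to NPC spaces (say, $\Delta$-convergence or the notion used in~\cite{Jostb}, defined via the asymptotic radius). The implications $(ii)\Rightarrow(iii)$ and $(iii)\Rightarrow(iv)$ are formal and require no work, so the substance lies in $(i)\Rightarrow(ii)$ and in $(iv)\Rightarrow(i)$.

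For $(i)\Rightarrow(ii)$, suppose $Tq=q$ and $q\in\overline{co}\{T^ip\}$. First I would observe that since $T$ is nonexpansive, the sequence $d(q,T^ip)$ is nonincreasing in $i$ (because $q=Tq$, so $d(q,T^{i+1}p)=d(Tq,T(T^ip))\le d(q,T^ip)$), hence converges to some $r\ge 0$. Averaging, $F_n(q,p)=\tfrac1n\sum_{i=0}^{n-1}d^2(q,T^ip)\to r^2$, and since $m_n(p)$ minimizes $F_n(\cdot,p)$ we get $F_n(m_n(p),p)\le F_n(q,p)\to r^2$. The key quantitative tool is the Alexandrov NPC inequality applied at the midpoint of the geodesic from $q$ to $m_n(p)$: for each $i$,
\begin{equation*}
 d^2\!\left(T^ip,\, \tfrac12 q\oplus\tfrac12 m_n(p)\right)\le \tfrac12 d^2(T^ip,q)+\tfrac12 d^2(T^ip,m_n(p))-\tfrac14 d^2(q,m_n(p)),
\end{equation*}
where $\tfrac12 q\oplus\tfrac12 m_n(p)$ denotes the geodesic midpoint. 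Averaging over $i$ and using that $m_n(p)$ is the minimizer (so the left side is $\ge F_n(m_n(p),p)$) yields $\tfrac14 d^2(q,m_n(p))\le \tfrac12 F_n(q,p)+\tfrac12 F_n(m_n(p),p)-F_n(m_n(p),p)$, i.e.\ $d^2(q,m_n(p))\le 2\bigl(F_n(q,p)-F_n(m_n(p),p)\bigr)$. It remains to show $F_n(m_n(p),p)\to r^2$ as well; this I would get by combining the monotonicity of $d(q,T^ip)$ with the fact that the means $m_n(p)$ lie in $\overline{co}\{T^ip\}$ together with a Cesàro-type argument, or alternatively by showing $\liminf F_n(m_n(p),p)\ge r^2$ from convexity of the target and the fact that $r=\inf_x\limsup_n d(x,T^np)$ is attained only at points with the asymptotic radius $r$.

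For $(iv)\Rightarrow(i)$, assume $q$ is a weak cluster point of $(m_n(p))$, say $m_{n_k}(p)\to q$ weakly. Since the orbit of $p$ is bounded, so is its convex hull (we recorded $\mathrm{diam}(co(S))=\mathrm{diam}(S)$), hence the sequence $(m_n(p))$ is bounded and, in a complete NPC space, bounded sequences have weak cluster points, so such $q$ exists. To see $q\in\overline{co}\{T^ip\}$: each $m_n(p)\in\overline{co}\{p,\ldots,T^{n-1}p\}\subset\overline{co}\{T^ip\}$, and this set is convex and closed, hence weakly closed in the NPC sense (closed convex sets in complete NPC spaces are weakly closed — this is standard, cf.~\cite{Jostb}), so the weak limit $q$ lies in it. To see $Tq=q$: here is where \textbf{distance convexity} enters, and this is the main obstacle. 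I would argue that $d(m_n(p),Tm_n(p))\to 0$: intuitively, $Tm_n(p)$ should be close to $m_n(Tp)$, and $m_n(Tp)$ should be close to $m_n(p)$ because the two $n$-tuples $(p,Tp,\ldots,T^{n-1}p)$ and $(Tp,\ldots,T^np)$ differ only in one slot out of $n$ (a telescoping/Cesàro estimate gives $F_n(\cdot,Tp)$ and $F_n(\cdot,p)$ uniformly $O(1/n)$ close on the bounded convex hull, hence their minimizers are $O(1/\sqrt n)$ close by the strong-convexity estimate $d^2(m_n(p),m_n(Tp))\le 2\bigl(F_n(m_n(p),Tp)-F_n(m_n(Tp),Tp)\bigr)$ derived as above). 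To pass from $Tm_n(p)\approx m_n(Tp)\approx m_n(p)$ to control on the actual $T$-image, I invoke distance convexity of $T$ precisely so that $x\mapsto d^2(m_n(x),q')$ is convex, which lets one relate $m_n(Tp)$ to $T$ applied to $m_n(p)$ along geodesics — this is the step where the definition~\eqref{T_convex} is essential and where I expect the delicate bookkeeping to concentrate. Once $d(m_n(p),Tm_n(p))\to 0$ along the full sequence, nonexpansiveness of $T$ combined with the weak convergence $m_{n_k}(p)\to q$ forces $Tq=q$: if $Tq\ne q$, the function $x\mapsto d(x,q)-d(x,Tq)$ would separate them in a way incompatible with $d(m_{n_k}(p),Tm_{n_k}(p))\to 0$ and the definition of weak limit via asymptotic centers (a demiclosedness-of-$I-T$ argument, adapted to NPC spaces as in~\cite{Jostb}). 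Finally $(i)$ is the conjunction of these two conclusions, closing the loop. The one genuinely new ingredient — and the crux of the whole argument — is leveraging distance convexity to show the means are asymptotically $T$-fixed; everything else is NPC geometry (the Alexandrov inequality, weak closedness of closed convex sets, existence of asymptotic centers) that I would quote from~\cite{Jostb}.
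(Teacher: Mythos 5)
Your reduction of $(i)\Rightarrow(ii)$ to the variance inequality $d^2(q,m_n(p))\le 2\bigl(F_n(q,p)-F_n(m_n(p),p)\bigr)$ is fine, but the remaining step, $F_n(m_n(p),p)\to r^2$, is exactly the substance of the implication, and neither of your two suggestions closes it. Minimality only gives the upper bound $F_n(m_n(p),p)\le F_n(q,p)\to r^2$; no argument is offered for the matching lower bound, and the monotonicity of $d(q,T^ip)$ plus ``$m_n(p)$ lies in the convex hull'' does not by itself prevent the minima of the mean functions from dipping below $r^2$ along subsequences. The alternative route silently identifies $r=\lim_i d(q,T^ip)$ (which depends on the particular fixed point $q$) with the asymptotic radius $\inf_x\limsup_n d(x,T^np)$, which is unjustified. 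Most tellingly, your sketch of $(i)\Rightarrow(ii)$ never uses distance convexity, yet this is precisely the hypothesis that makes strong convergence of the means possible (without it one only gets the weaker Theorem~\ref{fp_thm}). The paper's argument is structured around this: Lemma~\ref{Tdc_impl_close} shows $d(m_n(s),m_n(p))$ becomes small for every $s\in co(S_p)$ by induction over the sets $C_k$ exhausting the convex hull, and the induction step is exactly where the convexity of $x\mapsto d^2(m_n(x),q')$ enters; then $q\in\overline{co}(S_p)$ is approximated by such an $s$, the functions $F_n(\cdot,s)$ and $F_n(\cdot,q)$ are compared uniformly and Lemma~\ref{uni_strc_appr} (equiconvexity) transfers this to the minimizers, with $m_n(q)=q$ because $Tq=q$.

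In $(iv)\Rightarrow(i)$ your target $d(m_n(p),Tm_n(p))\to 0$ is the right one, but the proposed mechanism is flawed: distance convexity as defined in \eqref{T_convex} is a convexity property in the base point along geodesics and gives no relation between $Tm_n(p)$ and $m_n(Tp)$, so the step ``$Tm_n(p)\approx m_n(Tp)$'' has no support (for a general nonexpansive $T$ the image of a mean need not be the mean of the images). In fact no distance convexity is needed here: nonexpansiveness alone yields $F_n(Tm_n(p),p)\le F_n(m_n(p),p)+\tfrac1n d^2(Tm_n(p),p)$, so $Tm_n(p)$ is an almost-minimizer of $F_n(\cdot,p)$, and the equiconvexity of the mean functions (Corollary~\ref{set_mean_str_conv}) applied at the midpoint of the geodesic from $m_n(p)$ to $Tm_n(p)$ forces $d(Tm_n(p),m_n(p))\to 0$; combined with the weakly convergent subsequence and the demiclosedness-type Lemma~\ref{cond_for_fpoint} this gives $Tq=q$, while Jost's Mazur-type lemma places $q$ in $\overline{co}(S_p)$. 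So the proposal both omits distance convexity where it is indispensable ($(i)\Rightarrow(ii)$) and invokes it where it is neither needed nor capable of doing what is asked of it ($(iv)\Rightarrow(i)$); as written the proof does not go through.
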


Here $\mbox{w-}\lim_n m_n(p)$ refers to \emph{weak convergence} defined in terms of projections as follows. For any $p\in \N$ and any geodesic arc $\g$ in $\N$, there exists a unique point $\pi(p,\g)$ on $\g$ that is closest to $p$. We call $\pi(p,\g)$ the \emph{projection} of $p$ onto $\g$. A point $q\in \N$ is the weak limit of a sequence $(p_n)\subset \N$ if for every geodesic arc through $q$ the sequence $\pi(p_n,\g)$ converges to $q$. Similarly, a point $q\in \N$ is a weak cluster point of a sequence $(p_n)\subset \N$ if, for every neighborhood $U$ of $q$, there are infinitely many natural numbers $n\in \mathbb{N}$ such that $\pi(p_n,\g)\in U$ for every geodesic arc $\g$ through $q$. See Definitions 2.5 and 2.7 of~\cite{Jost} for details on weak convergence. 

Without the assumption that the mapping is distance convex, we still get an interesting weaker version of the theorem.

\begin{thm}[Fixed point theorem]\label{fp_thm} 
Let $(\N,d)$ be a complete global Alexandrov NPC space and $T:\N\to\N$ a nonexpansive mapping. Then, for any $p\in \N$ whose orbit is bounded there exists a fixed point $q$ of $T$ in the subset $\ol{co}\{p,Tp,T^2p,\ldots\}$ of $\N$. 
\end{thm}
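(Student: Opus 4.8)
\textbf{Proof proposal for Theorem~\ref{fp_thm}.}

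The plan is to build the fixed point as the limit of the sequence of means $(m_n(p))$, without invoking distance convexity. First I would restrict attention to the set $C = \ol{co}\{p,Tp,T^2p,\ldots\}$. Since the orbit $\{T^i p\}$ is bounded, it has finite diameter, and by the diameter argument recorded in the excerpt (the diameter of a set equals the diameter of its convex hull, and taking closures does not increase the diameter in a complete NPC space) the set $C$ is bounded, closed, and convex. It is therefore itself a complete global Alexandrov NPC space. Because $T$ is nonexpansive, it maps the orbit into itself, hence maps $co\{T^i p\}$ into itself (geodesics map to geodesics under... no — rather, if $x,y$ lie in the orbit then the geodesic $[Tx,Ty]$ has length $\le d(x,y)$; more carefully, nonexpansiveness and uniqueness of geodesics give $T([x,y]) \subseteq [Tx,Ty]$), and by continuity $T(C)\subseteq C$. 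So it suffices to find a fixed point of $T$ in $C$.

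Next I would use the means. For each $n$, the finite set $S_n = \{p,Tp,\ldots,T^{n-1}p\}$ has a well-defined mean $m_n(p)$, the unique minimizer of $F_n(\cdot,p)$, and by Theorem 3.2.1 and Lemma 3.3.4 of~\cite{Jostb} this minimizer lies in $\ol{co}(S_n)\subseteq C$. The key estimate is that $(m_n(p))$ is a Cauchy sequence, or at least that its asymptotic displacement under $T$ vanishes. For the latter: applying $T$ to the set $S_n$ gives $\{Tp,\ldots,T^n p\}$, which differs from $S_n$ only in the two endpoints; since $T$ is nonexpansive, $Tm_n(p)$ is close to being the minimizer of $F_n(\cdot,Tp)$, and a comparison of the mean functions for $S_n$ and $T S_n$ — using the quadratic-type convexity inequality for $F_n$ coming from the Alexandrov NPC inequality (the function $r\mapsto d^2(r,\cdot)$ is $2$-uniformly convex along geodesics, so $F_n$ has a quantitative minimizer-displacement bound: $F_n(r,p) \ge F_n(m_n(p),p) + d^2(r,m_n(p))$) — yields $d(m_n(p),Tm_n(p)) \le \tfrac{C}{n}$ for a constant $C$ depending only on $\mathrm{diam}(C)$. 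Here one estimates $F_n(Tm_n(p),p) - F_n(m_n(p),p)$ by comparing with $F_n(\cdot, Tp)$ and noting that $\sum_{i=0}^{n-1} d^2(r,T^i p)$ and $\sum_{i=1}^{n} d^2(r,T^i p)$ differ by only the $i=0$ and $i=n$ terms, each bounded by $\mathrm{diam}(C)^2$.

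Finally I would extract the fixed point. Since $C$ is bounded and complete, and bounded closed convex sets in a complete NPC space are weakly compact (every bounded sequence in a complete CAT(0) space has a weakly convergent subsequence — this is Definition/Proposition in~\cite{Jost}), the sequence $(m_n(p))$ has a weak cluster point $q\in C$. The displacement estimate $d(m_n(p),Tm_n(p))\to 0$, together with nonexpansiveness of $T$ and lower semicontinuity of the displacement functional $x\mapsto d(x,Tx)$ with respect to weak convergence (which follows because $d^2(\cdot,y)$ is convex and lower semicontinuous, hence weakly lower semicontinuous on the NPC space), forces $d(q,Tq)=0$, i.e. $Tq=q$. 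Since $q\in C=\ol{co}\{p,Tp,\ldots\}$, this completes the proof. The main obstacle I anticipate is the $O(1/n)$ displacement estimate for the means: one must carefully exploit the uniform (parallelogram-type) convexity of $F_n$ that the Alexandrov NPC inequality provides, and control the boundary terms arising when the index set $\{0,\ldots,n-1\}$ is shifted by one under $T$; the rest is soft functional-analytic reasoning about weak convergence in CAT(0) spaces.
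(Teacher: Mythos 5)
Your overall route is the same as the paper's: the means $m_n(p)$ lie in $\ol{co}\{p,Tp,\dots\}$, a shift-by-one comparison of $F_n(\cdot,p)$ with the image orbit plus the quantitative convexity of $F_n$ gives $d(m_n(p),Tm_n(p))\to 0$ (your variance inequality actually yields $d(m_n(p),Tm_n(p))=O(n^{-1/2})$ rather than $O(1/n)$, but only the vanishing matters), and a weak cluster point of the bounded sequence $(m_n(p))$ is then shown to be the fixed point. This is precisely how the paper argues, by reusing the $(iv)\Rightarrow(i)$ step of Theorem~\ref{main_thm}.

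However, two justifications in your sketch do not hold as stated. First, the claim $T([x,y])\subseteq [Tx,Ty]$ for a nonexpansive $T$ is false in general: nonexpansiveness only forces the image of the midpoint onto the geodesic $[Tx,Ty]$ when $d(Tx,Ty)=d(x,y)$, so $T(C)\subseteq C$ is not established — fortunately you never actually need this invariance, since $m_n(p)\in\ol{co}(S_p)$ already. The genuine gap is the last step: the assertion that $x\mapsto d(x,Tx)$ is weakly lower semicontinuous ``because $d^2(\cdot,y)$ is convex and lower semicontinuous'' does not follow, since in $d(x,Tx)$ the second argument moves with $x$; this functional is in general neither convex nor weakly lower semicontinuous, and the implication ``$m_{n_k}(p)$ converges weakly to $q$ and $d(m_{n_k}(p),Tm_{n_k}(p))\to 0$ imply $Tq=q$'' is a demiclosedness principle that requires proof. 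The paper devotes Lemma~\ref{cond_for_fpoint} to exactly this point, arguing via the projection onto the geodesic through $q$ and $Tq$, nonexpansiveness, and the equiconvexity of the family $\{d^2(\cdot,p)\}$; some such argument (or the standard asymptotic-center proof of demiclosedness) must replace your one-line appeal. Relatedly, membership of the weak cluster point in $\ol{co}(S_p)$ also needs an argument that closed convex sets capture weak limits — the paper gets this from Jost's version of Mazur's lemma — rather than weak compactness of $C$ alone.
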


The proof of our main theorem is quite lengthy due to the nonstandard framework we are working in. Once the mean ergodic theorem is proved, the fixed point theorem follows easily by using the same techniques. The outline of the proof of Theorem~\ref{main_thm} follows Krengel's proof of a mean ergodic theorem for Ces\'aro bounded operators in Banach spaces~\cite[Theorem 1.1. p.72]{Krengel}. 

Before the proofs of the theorems, we give several auxiliary results to clarify the proofs. The first two statements are general convexity results in global Alexandrov NPC spaces. The statements that follow consider the behavior of minimizers of sequences of convex functions. Then, the results achieved so far are applied to study the behavior of means of iterates of distance convex nonexpansive mappings. The first of the last two auxiliary results shows that projections to convex sets are continuous. The last auxiliary result gives a sufficient condition for the existence of a fixed point of a nonexpansive mapping. 

We begin with a definition.

\begin{definition}[Uniform convexity]
A nonnegative lower semicontinuous function $\psi:\N\to\R^+$ on a geodesic length space $(\N,d)$ is said to be \emph{uniformly convex} if the following quantitative strict convexity condition holds:

For any geodesic $\g:[0,1]\to \N$ and $\e>0$ there exists $\d>0$ such that if
\begin{equation*}
 \psi(\g(\p))\geq \p\psi(\g(0))+\p\psi(\g(1))-\d
\end{equation*}
then
\begin{equation*}
 d(\g(0),\g(1))<\e.
\end{equation*}

A family $\mathcal{F}$ of nonnegative lower semicontinuous functions $\N\to\R^+$ is said to be \emph{equiconvex} if there is a positive number $\d(\e)$ such that the above holds for all $F\in\mathcal{F}$ for any $\d$ smaller than $\d(\e)$. In this case, we call $\d(\e)$ the \emph{modulus of convexity} of $\mathcal{F}$.
\end{definition}
 
We have the following.
\begin{lemma}\label{d2_uni_c}
 Let $(\N,d)$ be a global Alexandrov NPC space. The family of functions $\F=\{d^2(\cdot,p):\N\to \R^+ : \ p\in \N\}$ is equiconvex with $\d(\e)=\e^2/4$.
\end{lemma}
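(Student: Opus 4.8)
The plan is to fix an arbitrary geodesic $\g:[0,1]\to\N$ and an arbitrary point $p\in\N$, and to show that if
\[
 d^2(\g(\tfrac12),p)\geq \tfrac12 d^2(\g(0),p)+\tfrac12 d^2(\g(1),p)-\d
\]
then $d(\g(0),\g(1))<\e$, with the explicit choice $\d=\d(\e)=\e^2/4$ that does not depend on $\g$ or $p$ (which is exactly what equiconvexity with modulus $\e^2/4$ demands). The starting point is the Alexandrov NPC inequality applied with the three points being $q:=p$, the geodesic endpoints $\g(0)$ and $\g(1)$, and the parameter value $t=\tfrac12$: this gives
\[
 d^2(p,\g(\tfrac12))\leq \tfrac12 d^2(p,\g(0))+\tfrac12 d^2(p,\g(1))-\tfrac14 l(\g)^2,
\]
and since $\g$ realizes the distance between its endpoints, $l(\g)=d(\g(0),\g(1))$.

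Combining this upper bound with the assumed lower bound on $d^2(p,\g(\tfrac12))$ immediately yields
\[
 \tfrac12 d^2(p,\g(0))+\tfrac12 d^2(p,\g(1))-\d \;\leq\; \tfrac12 d^2(p,\g(0))+\tfrac12 d^2(p,\g(1))-\tfrac14 d^2(\g(0),\g(1)),
\]
and cancelling the common terms leaves $\tfrac14 d^2(\g(0),\g(1))\leq \d$, i.e. $d(\g(0),\g(1))\leq 2\sqrt{\d}$. Taking $\d=\e^2/4$ gives $d(\g(0),\g(1))\leq\e$; a harmless strict-versus-nonstrict adjustment (e.g. noting the conclusion is required for every $\d$ smaller than $\d(\e)$, or shrinking $\e$ slightly) turns this into the strict inequality $d(\g(0),\g(1))<\e$ demanded in the definition. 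Finally I would note that $d^2(\cdot,p)$ is continuous, hence lower semicontinuous and nonnegative, so it genuinely lies in the class of functions to which the definition of uniform convexity applies, and that the bound $\d(\e)=\e^2/4$ is manifestly independent of the particular function $d^2(\cdot,p)$ in the family $\F$, which is precisely the equiconvexity assertion.

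There is essentially no obstacle here: the entire content is a one-line manipulation of the Alexandrov NPC inequality. The only points requiring a modicum of care are (i) correctly matching the roles of the points in the NPC inequality — the ``apex'' point $q$ in the definition plays the role of $p$ here, while the geodesic is the one joining $\g(0)$ to $\g(1)$ — and (ii) the cosmetic mismatch between the nonstrict inequality one naturally derives and the strict inequality in the definition, which is resolved by the standard device of replacing $\e$ with any slightly smaller value or by reading the ``for any $\d$ smaller than $\d(\e)$'' clause in the equiconvexity definition.
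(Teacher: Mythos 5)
Your argument is correct and is essentially identical to the paper's proof: both apply the Alexandrov NPC inequality at the midpoint with apex $p$, combine it with the assumed near-equality, and cancel to obtain $\tfrac14 d^2(\g(0),\g(1))\leq \d<\e^2/4$. The strictness issue you flag is handled in the paper exactly as you suggest, by taking $\d$ strictly smaller than $\e^2/4$ per the equiconvexity definition.
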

\begin{proof}
Let $p\in\N$, $\e>0$ and $\g:[0,1]\to \N$ be a geodesic. Let $\d<\e^2/4$ and assume that the inequality
\begin{equation}\label{str_con}
 d^2(\g(\p),p)\geq \p d^2(\g(0),p)+\p d^2(\g(1),p)-\d
\end{equation}
holds. In a global Alexandrov NPC the distance between two points is given by the length of the unique geodesic joining the points. Thus the NPC inequality reads 
\begin{equation*}
 \frac{1}{4}d^2(\g(0),\g(1))\leq -d^2(\g(\p),p)+ \p d^2(\g(0),p)+\p d^2(\g(1),p).
\end{equation*}
Together with~\eqref{str_con} we have
\begin{equation*}
 \frac{1}{4}d^2(\g(0),\g(1))\leq \d<\e^2/4.
\end{equation*}
\end{proof}

\begin{cor}\label{set_mean_str_conv}
Let $(\N,d)$ be a global Alexandrov NPC space. The family $\F=\{F_S:  S \mbox{ a finite subset of } \N \}$ of mean functions $F_S$ is equiconvex with $\d(\e)=\e^2/4$.
\end{cor}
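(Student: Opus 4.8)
The plan is to deduce Corollary~\ref{set_mean_str_conv} from Lemma~\ref{d2_uni_c} almost for free, the key being to extract from the \emph{proof} of that lemma a bound on the convexity defect of each distance--squared function that does not depend on the base point. Concretely, I would fix a finite set $S=\{S_0,\dots,S_{n-1}\}$, a number $\e>0$, and a geodesic $\g:[0,1]\to\N$, and apply the Alexandrov NPC inequality with $q=S_i$ and $t=\p$, using that $l(\g)=d(\g(0),\g(1))$ in an Alexandrov NPC space, to obtain for each $i$
\[
 d^2(S_i,\g(\p))\leq \p\, d^2(S_i,\g(0))+\p\, d^2(S_i,\g(1))-\tfrac14 d^2(\g(0),\g(1)).
\]
Averaging these $n$ inequalities over $i$ turns the right--hand side into $\p F_S(\g(0))+\p F_S(\g(1))-\tfrac14 d^2(\g(0),\g(1))$ and the left--hand side into $F_S(\g(\p))$. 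Thus, if $\d<\e^2/4$ and $F_S(\g(\p))\geq \p F_S(\g(0))+\p F_S(\g(1))-\d$, combining the two inequalities gives $\tfrac14 d^2(\g(0),\g(1))\leq\d<\e^2/4$, hence $d(\g(0),\g(1))<\e$. Since each $d^2(\cdot,S_i)$ is continuous, $F_S$ is nonnegative and lower semicontinuous, so it is an admissible function in the definition of uniform convexity; and the estimate above is uniform over all finite $S\subset\N$, so $\F$ is equiconvex with modulus $\d(\e)=\e^2/4$.

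The one point worth flagging, and the only place a careless argument would go wrong, is that one cannot simply invoke ``an average of equiconvex functions is equiconvex'': knowing that the \emph{average} convexity defect is at most $\d$ only forces one of the $n$ summands to have defect at most $\d$, which would cost a factor of $n$ in the modulus. The reason there is no such loss here is exactly that the NPC inequality bounds \emph{each} summand's defect below by the \emph{same} quantity $\tfrac14 d^2(\g(0),\g(1))$, so the average is bounded below by that quantity as well. Beyond this observation there is no real obstacle: the corollary is a direct bookkeeping consequence of Lemma~\ref{d2_uni_c}.
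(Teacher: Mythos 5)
Your argument is correct, and it retains the modulus $\delta(\epsilon)=\epsilon^2/4$, but it follows a slightly different route from the paper's. You average the quantitative NPC inequality
\begin{equation*}
 d^2\bigl(S_i,\gamma(\tfrac12)\bigr)\leq \tfrac12 d^2(S_i,\gamma(0))+\tfrac12 d^2(S_i,\gamma(1))-\tfrac14 d^2(\gamma(0),\gamma(1))
\end{equation*}
over $i$, so that $F_S$ itself is shown to satisfy the same pointwise defect bound as each $d^2(\cdot,S_i)$; combined with the assumed inequality for $F_S$ this gives $\tfrac14 d^2(\gamma(0),\gamma(1))\leq\delta<\epsilon^2/4$ directly. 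The paper instead reduces to Lemma~\ref{d2_uni_c} by a pigeonhole (contrapositive) step: if every summand satisfied $d^2(\gamma(\tfrac12),S_i)<\tfrac12 d^2(\gamma(0),S_i)+\tfrac12 d^2(\gamma(1),S_i)-\delta$, then averaging would contradict the hypothesis on $F_S$, so some index $i_0$ satisfies the hypothesis of Lemma~\ref{d2_uni_c} with the \emph{same} $\delta$, and that lemma concludes. Your version is marginally stronger in that it exhibits the explicit strong-convexity inequality $F_S(\gamma(\tfrac12))\leq\tfrac12 F_S(\gamma(0))+\tfrac12 F_S(\gamma(1))-\tfrac14 d^2(\gamma(0),\gamma(1))$ for the mean functions, rather than only the $\epsilon$--$\delta$ statement. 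One correction to your closing remark: the pigeonhole route does \emph{not} cost a factor of $n$. If the average defect is at most $\delta$, then the smallest summand defect is at most $\delta$ (the minimum never exceeds the average), and applying Lemma~\ref{d2_uni_c} to that single summand already yields $d(\gamma(0),\gamma(1))<\epsilon$; this is precisely the paper's proof, and more generally it shows that averages of members of an equiconvex family are equiconvex with the same modulus, with no dependence on $n$.
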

\begin{proof}
 Let $S=\{S_0,\ldots,S_{n-1}\}$ be a finite set, let $\e>0$ and $\g:[0,1]\to \N$ be a geodesic. Let $\d<\e^2/4$ and assume that the inequality 
\begin{equation}\label{F_str_con}
F_S(\g(\p))\geq \p F_S(\g(0))+\p F_S(\g(1))-\d
\end{equation}
holds. If the inequality
\begin{equation*}
 d^2(\g(\p),S_i)\geq \p d^2(\g(0),S_i)+\p d^2(\g(1),S_i)-\d
\end{equation*}
is false for all $i=0,\ldots, n-1$, then the inequality~\eqref{F_str_con} is also false. Thus we have for some $0\leq i_0\leq n-1$,
\begin{equation*}
 d^2(\g(\p),S_{i_0})\geq \p d^2(\g(0),S_{i_0})+\p d^2(\g(1),S_{i_0})-\d.
\end{equation*}
Hence we have
\begin{equation*}
 d(\g(0),\g(1))<\e
\end{equation*}
by the previous lemma.
\end{proof}

The proof of our main theorem considers the asymptotic behavior of the mean functions of the iterates of points of a given mapping. In this case, we wish to analyze the asymptotic behavior of the minimizers of these mean functions.  A criterion for the asymptotic minimizers to be close is given by the lemma below.

\begin{lemma}\label{uni_strc_appr}
Let $(\N,d)$ be a geodesic length space. Let $(F_n)$ and $(G_n)$ be two sequences of functions $\N\to \R^+$, for which there exist unique minimizers, $(f_n)$ and $(g_n)$ respectively. Assume also that $(f_n)$ and $(g_n)$ belong to some subset $S$ of $\N$ and that $(F_n)$ is equiconvex with modulus of convexity of $\d(\e)$.

Let $\e>0$ and assume that there exists an $N\in\mathbb{N}$ such that the inequality
\begin{equation*}
 \sup_{p\in S}|F_n(p)-G_n(p)|<\d(\e)
\end{equation*}
holds for all $n\geq N$. Then
\begin{equation*}
 d(f_n,g_n)<\e
\end{equation*}
for all $n\geq N$.
\end{lemma}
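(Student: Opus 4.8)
The plan is to fix $n\ge N$ and show directly that the geodesic joining $f_n$ to $g_n$ is short, by checking that the equiconvexity hypothesis for $F_n$ is triggered at the midpoint of that geodesic. If $f_n=g_n$ there is nothing to prove, so assume not; since $\N$ is a geodesic length space, choose a geodesic $\g:[0,1]\to\N$ with $\g(0)=f_n$ and $\g(1)=g_n$. Both endpoints lie in $S$ by hypothesis, so the closeness estimate $|F_n-G_n|<\d(\e)$ is available at each of them; the midpoint $\g(\p)$ need not lie in $S$, but at $\g(\p)$ we will only use the minimality of $F_n$.

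The first step is to record that $g_n$ is almost optimal for $F_n$. Set $\eta:=\max\{|F_n(f_n)-G_n(f_n)|,\ |F_n(g_n)-G_n(g_n)|\}$, so that $\eta<\d(\e)$. Using that $g_n$ minimizes $G_n$ together with the two pointwise estimates,
\[
F_n(g_n)\ \le\ G_n(g_n)+\eta\ \le\ G_n(f_n)+\eta\ \le\ F_n(f_n)+2\eta ,
\]
hence $F_n(\g(1))\le F_n(\g(0))+2\eta$. Since $f_n$ minimizes $F_n$ we also have $F_n(\g(\p))\ge F_n(f_n)=F_n(\g(0))$, and therefore
\[
\p F_n(\g(0))+\p F_n(\g(1))-\eta\ \le\ \p F_n(\g(0))+\p\bigl(F_n(\g(0))+2\eta\bigr)-\eta\ =\ F_n(\g(0))\ \le\ F_n(\g(\p)).
\]
Thus $\psi=F_n$ satisfies the uniform convexity hypothesis along $\g$ for the parameter $\eta$, which is strictly below the modulus $\d(\e)$ of the equiconvex family $(F_n)$ (and $\d(\e)>0$ by definition, which covers the degenerate case $\eta=0$ by choosing any parameter in $(0,\d(\e))$). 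Equiconvexity then gives $d(\g(0),\g(1))=d(f_n,g_n)<\e$, and since $n\ge N$ was arbitrary the lemma follows.

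I do not expect a serious obstacle: the argument is a short chain of inequalities. The only subtle points are that the comparison between $F_n$ and $G_n$ is invoked solely at the two minimizers $f_n,g_n\in S$ — which is exactly why the hypothesis is phrased as a supremum over $S$ rather than over all of $\N$, and why it is harmless that $\g(\p)\notin S$ — and that the strictness of the hypothesis $\sup_{p\in S}|F_n(p)-G_n(p)|<\d(\e)$ must be retained so that $\eta<\d(\e)$ strictly, allowing the equiconvexity implication (stated for parameters strictly below the modulus) to apply.
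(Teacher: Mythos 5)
Your argument is correct and is essentially the paper's proof: both use that $g_n$ is a near-minimizer of $F_n$ (via the chain comparing $F_n$ and $G_n$ at the two minimizers, which lie in $S$), then invoke minimality of $f_n$ at the midpoint of the geodesic from $f_n$ to $g_n$ and trigger the equiconvexity of $(F_n)$. The only difference is cosmetic — the paper normalizes to $\wt{F}_n=F_n-F_n(f_n)$, $\wt{G}_n=G_n-G_n(g_n)$ and bounds the defect by $\d(\e)$, while you keep $F_n$ unnormalized and get the sharper bound $\eta<\d(\e)$, also handling the degenerate case $\eta=0$ explicitly.
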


\begin{proof}
Denote $\wt{F}_n=F_n-F_n(f_n)$ and $\wt{G}_n=G_n-G_n(g_n)$. Now, the unique minima of $\wt{F}_n$ and $\wt{G}_n$ are zero and the sequence of functions $(\wt{F}_n)$ is equiconvex with modulus of convexity $\d(\e)$. 

Let $\e>0$. By assumption, there is an $N=N(\e)\in\mathbb{N}$ such that for all $n\geq N$
\begin{equation*}
 \sup_{p\in S}|F_n(p)-G_n(p)|<  \d(\e).
\end{equation*}
If $G_n(g_n)-F_n(f_n)\geq 0$, we have
\begin{align*}
 |G_n(g_n)-F_n(f_n)|&=G_n(g_n)-F_n(f_n)\leq G_n(f_n)-F_n(f_n) \\
  &\leq|G_n(f_n)-F_n(f_n)| < \d(\e).
\end{align*}
This implies that
\begin{equation*}
 \sup_{p\in S}|\wt{F}_n(p)-\wt{G}_n(p)|\leq \sup_{p\in S}|F_n(p)-G_n(p)|+ |G_n(g_n)-F_n(f_n)| < 2\d(\e).
\end{equation*}
If $G_n(g_n)-F_n(f_n)\leq 0$, an analogous proof shows that the same conclusion still holds.

Since $\wt{G}_n(g_n)=0$, we get the following inequality
\begin{equation*}
 2\d(\e)> \sup_{p\in S}|\wt{F}_n(p)-\wt{G}_n(p)|_S \geq  |\wt{F}_n(g_n)-\wt{G}_n(g_n)|=\wt{F}_n(g_n).
\end{equation*}
Let $\gamma_n: [0,1]\to \N$ be a geodesic with $\g_n(0)=f_n$ and $\g_n(1)= g_n$. Then, at the midpoint of the geodesic, we have the following estimate 
\begin{equation*}
 \wt{F}_n(\gamma_n(\p))+\d(\e)\geq \d(\e) > \p(\wt{F}_n(\gamma_n(0))+\wt{F}_n(\gamma_n(1))).
\end{equation*}
The equiconvexity of the family $(\wt{F}_n)$ implies
\begin{equation*}
d(f_n,g_n) <\e
\end{equation*}
for all $n\geq N$.
\end{proof}

\begin{lemma}\label{Tdc_impl_close}
 Let $T:\N\to \N $ be a distance convex nonexpansive mapping on an Alexandrov NPC space $(\N,d)$. Assume that $T$ has a bounded orbit $\{p,Tp,T^2p,\ldots\}$ and that $s\in co\{p,Tp,T^2p,\ldots\}$. Then there is an $N\in \mathbb{N}$ such that the means of the first $n$ iterates of $p$ and $s$ satisfy 
\begin{equation*}
 d(m_n(p),m_n(s))<\e
\end{equation*}
for all $n\geq N$.
\end{lemma}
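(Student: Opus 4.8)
The plan is to use the description \eqref{cohull} of $co\{p,Tp,T^2p,\ldots\}$ as $\bigcup_{k\ge 0}C_k$ and to argue by induction on $k$, treating the base case $k=0$ by a direct comparison of mean functions and the inductive step via distance convexity of $T$. To begin I would put $S=\ol{co}\{p,Tp,T^2p,\ldots\}$ and note that $S$ is bounded: by hypothesis the orbit is bounded, and a set, its convex hull, and the closure thereof all have the same diameter, so $D:=\operatorname{diam}(S)<\infty$. All the means occurring below lie in $S$, since $m_n(T^ip)\in\ol{co}\{T^ip,\ldots,T^{n+i-1}p\}\subseteq S$ for every $i\ge 0$, and $d(r,T^kp)\le D$ for all $r\in S$ and $k\ge0$. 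The minimizers defining these means exist and are unique because $\N$ is a complete global Alexandrov NPC space.

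For the base case $s\in C_0$, so $s=T^ip$ for some $i$, I would compare $F_n(\cdot,p)$ and $F_n(\cdot,T^ip)$. The finite sets $\{p,\ldots,T^{n-1}p\}$ and $\{T^ip,\ldots,T^{n+i-1}p\}$ differ in only $2i$ of their elements, each within distance $D$ of any $r\in S$, so
\[
\sup_{r\in S}\bigl|F_n(r,p)-F_n(r,T^ip)\bigr|\ \le\ \frac{2iD^2}{n}\ \longrightarrow\ 0 .
\]
Since the family $\{F_n(\cdot,p)\}_{n}$ is equiconvex with $\d(\e)=\e^2/4$ by Corollary~\ref{set_mean_str_conv}, Lemma~\ref{uni_strc_appr} (applied with $F_n=F_n(\cdot,p)$, $G_n=F_n(\cdot,T^ip)$ and the subset $S$) yields an $N=N(i,\e)$ with $d(m_n(p),m_n(T^ip))<\e$ for all $n\ge N$.

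For the inductive step I would assume the conclusion at level $k-1$: for each $s'\in C_{k-1}$ and each $\e>0$ there is $N(s',\e)$ with $d(m_n(p),m_n(s'))<\e$ once $n\ge N(s',\e)$. Given $s\in C_k$, write $s=\g(t)$ for a geodesic $\g$ from $a\in C_{k-1}$ to $b\in C_{k-1}$ and some $t\in[0,1]$. Since $T$ is distance convex, $r\mapsto d^2(m_n(r),q)$ is convex for every $q$ and $n$; taking $q=m_n(p)$ and evaluating along $\g$ gives
\[
d^2(m_n(s),m_n(p))\ \le\ (1-t)\,d^2(m_n(a),m_n(p))+t\,d^2(m_n(b),m_n(p)) .
\]
Hence, for $n\ge\max\{N(a,\e),N(b,\e)\}$, both terms on the right are below $\e^2$, so $d(m_n(p),m_n(s))<\e$. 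By \eqref{cohull} every $s\in co\{p,Tp,T^2p,\ldots\}$ lies in some $C_k$, and the induction then proves the lemma.

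The step I expect to be the real obstacle is getting the base case into the right form, namely realising that one should not try to compare $F_n(\cdot,p)$ with $F_n(\cdot,s)$ for a general $s$ of the convex hull. Such a comparison fails, because the termwise difference $|d^2(r,T^ip)-d^2(r,T^is)|$ is only bounded --- $d(T^ip,T^is)\le d(p,s)$ need not tend to $0$ --- so the Cesàro averages $F_n(\cdot,p)$ and $F_n(\cdot,s)$ need not become close. The elementary comparison works only for orbit points, where just $2i$ of the $n$ summands change; distance convexity is precisely the property that then transports the conclusion from the orbit to its convex hull, which is why it enters the proof.
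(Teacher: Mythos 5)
Your proposal is correct and follows essentially the same route as the paper: induction over the sets $C_k$ from \eqref{cohull}, with the base case handled by comparing $F_n(\cdot,p)$ and $F_n(\cdot,T^{i}p)$ (only finitely many summands differ) via Corollary~\ref{set_mean_str_conv} and Lemma~\ref{uni_strc_appr}, and the inductive step handled by the distance convexity of $T$ along the geodesic joining two points of $C_{k-1}$. The only cosmetic difference is that you take the supremum over $\ol{co}(S_p)$ rather than $\ol{co}(S_s)\cup\ol{co}(S_p)$, which for orbit points amounts to the same set.
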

\begin{proof}
Denote the orbit of $p$ by $S_p$. Assume that $s\in co(S_p)$ and let $F_n:\N\times\N \to \R$ be the mean function
\begin{equation*}
 F_n(r,q)=\frac{1}{n}\sum_{i=0}^{n-1}d^2(r,T^iq)
\end{equation*}
of the first $n$ iterates of the second argument. The family $\{F_n(\cdot,s): n\in \N\}$ is equiconvex with modulus of convexity $\d(\e)=\e^2/4$ by Corollary~\ref{set_mean_str_conv}. 

Since $s$ belongs to the bounded set $co(S_p)$ and $T$ is nonexpansive, the orbit of $s$ is bounded: For $i\in \mathbb{N}$, we have 
\begin{align}\label{bded_orbits}
d(s,T^is)&\leq d(s,p)+d(p,T^ip)+ d(T^ip,T^is) \nonumber \\
  &\leq 2d(p,s)+d(p,T^ip)\leq 3~\mbox{diam}(co(S_p)).
\end{align}
Thus the unique minimizers of the functions $\{F_n(\cdot,s)\}$ belong to the bounded set $\ol{co}(S_s) \subset \N$ by the remarks following Definition~\ref{mean_map}.

Recall from Eq.~\ref{cohull} that the convex hull of $S_p$ can be expressed as $\cup_{k=0}^{\infty}C_k$. We prove the claim of the lemma by induction on the index $k$. Assume $s\in C_0=S_p$. Now $s=T^{i_0}p$ for some $i_0$ and we have the following estimate (for $n> i_0$):
\begin{align*}
 F_n(r,s)&=F_n(r,T^{i_0}p)=\frac{1}{n}\sum_{i=0}^{n-1}d^2(r,T^iT^{i_0}p)=\frac{1}{n}\sum_{i=0}^{n-1}d^2(r,T^ip) \\
 &-\frac{1}{n}\sum_{i=0}^{i_0-1}d^2(r,T^ip)+\frac{1}{n}\sum_{i=n}^{i_0+n-1}d^2(r,T^ip).
\end{align*}
By the remarks above, $\ol{co}(S_s)$ is bounded and $\ol{co}(S_p)$ is bounded by assumption. Thus we have that
\begin{equation*}
 \sup_{r\in \ol{co}(S_s)\cup\ol{co}(S_p)}|F_n(r,s)-F_n(r,p)|< \e^2/4
\end{equation*}
for all $n\geq N$ for some sufficiently large $N=N(s)\in \mathbb{N}$. By Lemma~\ref{uni_strc_appr}, there holds
\begin{equation*}
 d(m_n(s),m_n(p))<\e
\end{equation*}
for all $n\geq N(s)$. We have shown that the claim holds for $k=0$ for any $s\in C_0$.

Assume then that the claim holds for some $k>0$ and that $s\in C_{k+1}$. By the definition of $C_{k+1}$, the point $s$ is of the form $s=\g(t_0)$, where $\g:[0,1]\to \N$ is a geodesic with $\g(0), \g(1)\in C_k$ and $t_0\in [0,1]$. The induction assumption for $\g(0)$ and $\g(1)$ together with the assumption that $T$ is distance convex yields
\begin{align*}
 d^2(&m_n(s),m_n(p))\leq t_0 d^2(m_n(\g(0)),m_n(p)) \\
  &+(1-t_0) d^2(m_n(\g(1)),m_n(p))<\e^2
\end{align*}
for all $n\geq \max\{N(\g(0)),N(\g(1))\}$, where $N(\g(0))$ and $N(\g(0))$ are defined by the induction assumption. This completes the induction step.
\end{proof}

Projections to closed convex sets in a complete global Alexandrov NPC space exist by Lemma 2.5. of~\cite{Jost}. The continuity of the projections is given by the following. 
\begin{lemma}
 Let $(\N,d)$ be a complete global Alexandrov NPC space and $C\subset \N$ a closed convex set. Then the projection to $C$ is a continuous mapping.
\end{lemma}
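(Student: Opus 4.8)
The statement to prove is that the projection $\pi_C:\N\to C$ onto a closed convex set $C$ in a complete global Alexandrov NPC space is continuous. The plan is to show something stronger, namely that $\pi_C$ is nonexpansive, which immediately gives continuity. First I would fix two points $p,q\in\N$ and set $p'=\pi_C(p)$, $q'=\pi_C(q)$; the goal is $d(p',q')\le d(p,q)$. The key geometric input is the ``obtuse angle'' property of projections in NPC spaces: for any $r\in C$, the point $p'$ satisfies
\begin{equation*}
 d^2(p,r)\geq d^2(p,p')+d^2(p',r),
\end{equation*}
and similarly for $q'$. I would derive this from the Alexandrov NPC inequality applied to the geodesic $\g$ from $p'$ to $r$ (which lies in $C$ by convexity): since $p'$ minimizes $d(p,\cdot)$ over $C$, the function $t\mapsto d^2(p,\g(t))$ has a minimum at $t=0$, and feeding the NPC inequality $d^2(p,\g(t))\le(1-t)d^2(p,p')+td^2(p,r)-t(1-t)d^2(p',r)$ the information $d^2(p,\g(t))\ge d^2(p,p')$, dividing by $t$ and letting $t\to 0^+$, yields $0\le d^2(p,r)-d^2(p,p')-d^2(p',r)$, as claimed.

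Next I would combine the two obtuse-angle inequalities, taking $r=q'$ in the first and $r=p'$ in the second:
\begin{equation*}
 d^2(p,q')\geq d^2(p,p')+d^2(p',q'),\qquad d^2(q,p')\geq d^2(q,q')+d^2(p',q').
\end{equation*}
Adding these gives $d^2(p,q')+d^2(q,p')\ge d^2(p,p')+d^2(q,q')+2d^2(p',q')$. To finish, I need to control $d^2(p,q')+d^2(q,p')$ from above in terms of $d(p,q)$, $d(p,p')$, $d(q,q')$ and $d(p',q')$. The clean way is to use the NPC quadrilateral (or ``four-point'') inequality: for the midpoint comparison, apply the Alexandrov inequality on the geodesic from $p$ to $q'$ with comparison point $p'$, and the one from $q$ to $p'$ with comparison point $q'$. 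Alternatively — and this is the route I'd actually pursue — I would invoke the standard fact (a consequence of the NPC inequality, e.g. Lemma~2.2 or the reshetnyak-type estimates in~\cite{Jost,Jostb}) that the four points $p,p',q',q$ satisfy
\begin{equation*}
 d^2(p,q')+d^2(p',q)\leq d^2(p,q)+d^2(p',q')+2\,d(p,p')\,d(q,q'),
\end{equation*}
or some inequality of this flavour, and then substitute into the previous display to cancel $2d^2(p',q')$ and conclude $d^2(p',q')\le d^2(p,q)+2d(p,p')d(q,q')-\cdots$; after rearranging one obtains $d(p',q')\le d(p,q)$.

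The main obstacle is the last step: transforming the two obtuse-angle inequalities into the final nonexpansiveness estimate requires a genuine second use of the NPC curvature bound (the obtuse-angle inequalities alone hold in any metric space with the projection property and do not by themselves force nonexpansiveness). The cleanest self-contained argument uses midpoints: let $m$ be the midpoint of the geodesic from $p'$ to $q'$; apply the NPC inequality once with $(q;\,\text{geodesic }p'q';\,t=\tfrac12)$ and once with $(p;\,\text{geodesic }p'q';\,t=\tfrac12)$, add, and combine with the obtuse-angle bounds and the inequality $d(p,m)+d(m,q)\ge d(p,q)$ wait — one instead estimates $d(p,m)$ and $d(q,m)$ from below by the minimality of $p',q'$ only if $m\in C$, which holds since $C$ is convex; then minimality gives $d(p,m)\ge d(p,p')$ and $d(q,m)\ge d(q,q')$, and the two applications of the NPC inequality at the midpoint yield exactly the needed reverse bound on $d^2(p',q')$. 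I would present the argument in this order: (1) obtuse-angle lemma via NPC inequality and $t\to 0$; (2) midpoint $m\in C$ with $d(p,m)\ge d(p,p')$, $d(q,m)\ge d(q,q')$; (3) two NPC inequalities at $m$, added; (4) algebra to extract $d(p',q')\le d(p,q)$; (5) conclude continuity (indeed Lipschitz with constant $1$).
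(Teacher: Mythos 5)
Your step (1) is fine: the derivation of the obtuse--angle inequality $d^2(p,r)\ge d^2(p,p')+d^2(p',r)$ for $r\in C$ from the NPC inequality with $t\to0^+$ is correct. The gap is in the concluding step, where you claim that two applications of the NPC inequality at the midpoint $m$ of $[p',q']$, combined with the minimality bounds $d(p,m)\ge d(p,p')$ and $d(q,m)\ge d(q,q')$, ``yield exactly the needed reverse bound on $d^2(p',q')$''. They do not: working them out gives $d^2(p,q')\ge d^2(p,p')+\tfrac12 d^2(p',q')$ and $d^2(q,p')\ge d^2(q,q')+\tfrac12 d^2(p',q')$, i.e.\ lower bounds on the diagonals $d(p,q')$, $d(q,p')$ --- the same kind of information as your obtuse--angle inequalities, only weaker --- and no upper bound on $d(p',q')$ in terms of $d(p,q)$ can be extracted from these alone. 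To close the nonexpansiveness argument you genuinely need the quadrilateral estimate $d^2(p,q')+d^2(q,p')\le d^2(p,q)+d^2(p',q')+2d(p,p')d(q,q')$ (a Reshetnyak/Berg--Nikolaev type four--point inequality), which you invoke only as ``some inequality of this flavour'' without proof; it is true in CAT(0) spaces but is not among the tools the paper sets up, and proving it is essentially as hard as the nonexpansiveness you are after. So as written, neither of your two routes is complete.

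Two remarks that may help. First, the lemma only claims continuity, and your obtuse--angle inequality already suffices for that without any four--point inequality: from $d^2(p,q')\ge d^2(p,p')+d^2(p',q')$ together with the triangle bounds $d(p,q')\le d(p,q)+d(q,q')$ and $d(p,p')\le d(p,q')$ one gets $d^2(p',q')\le d^2(p,q')-d^2(p,p')\le \bigl(d(p,q)+d(q,q')\bigr)^2-d^2(p,p')$, and since also $d(p,p')\ge d(q,p')-d(p,q)\ge d(q,q')-d(p,q)$, this tends to $0$ as $p\to q$ with $q$ fixed; spelling this out gives continuity (indeed a local H\"older--$\tfrac12$ estimate) by elementary means. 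Second, the paper's own proof is different and more modest: it never aims at nonexpansiveness, but takes the midpoint of the geodesic from $\pi(p)$ to $\pi(q)$ (which lies in $C$ by convexity), uses minimality of the projections, convexity of $d(\cdot,p)$ along that geodesic and the triangle inequality to show that for $p$ near $q$ the strict convexity of $d^2(\cdot,p)$ almost fails at the midpoint, and then the equiconvexity of the family $\{d^2(\cdot,p)\}$ (Lemma~\ref{d2_uni_c}) forces $d(\pi(p),\pi(q))\le\e$. Your plan aims at a stronger statement, which is legitimate, but the decisive curvature input for it is exactly the step you left unproved.
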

\begin{proof}
Let $\e>0$, and denote by $\pi$ the projection to $C$. Let $p,q\in \N$ and let $\g$ be a geodesic with $\g(0)=\pi(p)$ and $\g(1)=\pi(q)$. Since $C$ is convex, we have $\g(\p)\in C$. Thus, by the definition of $\pi$ there holds
\begin{equation*}
 d(\pi(p),p)\leq d(\g(\p),p) \mbox{ and } d(\pi(q),q)\leq d(\g(\p),q).
\end{equation*}
We also have, by the convexity of $d(\cdot,p)$,
\begin{equation*}
 d(\g(\p),p)\leq \max\{d(\pi(p),p),d(\pi(q),p)\}= d(\pi(q),p),
\end{equation*}
since $d(\pi(p),p)\leq d(\pi(q),p)$. The above inequalities together with the triangle inequality give
\begin{align*}
 d(\g(\p),p)&\leq d(\pi(q),p)\leq d(\pi(q),q)+d(q,p) \\ 
  &\leq d(\g(\p),q)+d(q,p)\leq d(\g(\p),p)+2d(q,p).
\end{align*}
Thus, for any $p$ close enough to $q$, we have $d^2(\pi(q),p)\leq d^2(\g(\p),p)+\e^2/4$ and therefore
\begin{equation*}
 \p d^2(\pi(p),p)+ \p d^2(\pi(q),p) \leq d^2(\g(\p),p)+\e^2/4.
\end{equation*}
The NPC inequality gives
\begin{equation*}
 d(\pi(p),\pi(q))\leq \e
\end{equation*}
for any $p$ close enough to $q$. Thus $\pi$ is continuous.
\end{proof}

The last lemma before the proofs of the main theorems is the following one giving a sufficient condition for a nonexpansive mapping to have a fixed point.
\begin{lemma}\label{cond_for_fpoint}
 Let $(\N,d)$ be a complete global Alexandrov NPC space and let $T:\N\to\N$ be nonexpansive. If a sequence $p_n$ converges weakly to $q$ and $d(p_n,Tp_n)\to 0$, $n\to \infty$, then $Tq=q$.
\end{lemma}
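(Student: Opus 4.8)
The plan is to argue by contradiction: assume $Tq\neq q$ and extract a contradiction from the Alexandrov NPC inequality along the geodesic from $q$ to $Tq$, using weak convergence to control the projection of $p_n$ onto that geodesic and nonexpansiveness together with $d(p_n,Tp_n)\to 0$ to control $d(p_n,Tq)$. So suppose $L:=d(q,Tq)>0$, let $\g:[0,1]\to\N$ be the unique geodesic with $\g(0)=q$ and $\g(1)=Tq$ (so $l(\g)=L$), and write $m=\g(\p)$ for its midpoint. Since $(p_n)$ converges weakly it is bounded, so fix $R>0$ with $d(p_n,q)\leq R$ for all $n$, and put $\e_n=d(p_n,Tp_n)\to 0$. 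Because $\g$ is a geodesic arc through $q$ and $p_n$ converges weakly to $q$, the projections $r_n:=\pi(p_n,\g)$ satisfy $r_n\to q$; put $\d_n=d(r_n,q)\to 0$.

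Next I would record two elementary estimates. Since $r_n$ is the point of $\g$ nearest to $p_n$ and $m,q\in\g$, we have $d(p_n,r_n)\leq d(p_n,m)$ and $d(p_n,q)\leq d(p_n,r_n)+\d_n\leq d(p_n,m)+\d_n$, hence $d(p_n,m)\geq d(p_n,q)-\d_n$ and so $d^2(p_n,m)\geq d^2(p_n,q)-2R\d_n$. On the other hand, the triangle inequality and nonexpansiveness of $T$ give $d(p_n,Tq)\leq d(p_n,Tp_n)+d(Tp_n,Tq)\leq\e_n+d(p_n,q)$, hence $d^2(p_n,Tq)\leq d^2(p_n,q)+2R\e_n+\e_n^2$. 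Applying the NPC inequality to the triple $p_n$, $q=\g(0)$, $Tq=\g(1)$ at $t=\p$ gives $d^2(p_n,m)\leq\p d^2(p_n,q)+\p d^2(p_n,Tq)-\frac{1}{4}L^2$. Substituting the two estimates and cancelling $d^2(p_n,q)$ leaves $\frac{1}{4}L^2\leq 2R\d_n+R\e_n+\p\e_n^2$, and letting $n\to\infty$ forces $L^2\leq 0$, contradicting $L>0$. Therefore $Tq=q$.

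The only genuinely nonroutine point, and hence the main obstacle, is the interplay between the ``linear'' distance estimates --- where the hypotheses $d(p_n,Tp_n)\to 0$ and weak convergence enter, each contributing only an $o(1)$ error --- and the ``quadratic'' NPC inequality, where the strict-convexity gap $\frac{1}{4}L^2$ appears; matching the two up is precisely what makes the boundedness of $(p_n)$ necessary, so that the cross terms $2R\d_n$ and $2R\e_n$ can be absorbed into $o(1)$. A minor technical care is that weak convergence is applied to the geodesic arc $\g$ having $q$ as an endpoint, which is legitimate (extending $\g$ slightly beyond $q$ if the space permits, or noting that such arcs are admissible in the definition of weak convergence).
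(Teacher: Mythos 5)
Your argument is correct in substance and takes a genuinely different route from the paper's. The paper works on the maximal geodesic $C$ containing $q$ and $Tq$, introduces the auxiliary points $c_n=\pi(T(\pi(p_n)))$, and combines continuity of the projection with the equiconvexity of the family $\{d^2(\cdot,p)\}$ (Lemma~\ref{d2_uni_c}) to force the side $\overrightarrow{\pi(p_n)c_n}$ of an asymptotically right geodesic triangle to collapse, giving $d(q,Tq)\leq\e$ for every $\e>0$. You instead apply the NPC inequality once, at the midpoint of the geodesic from $q$ to $Tq$, and play the uniform gap $\tfrac14 d^2(q,Tq)$ against two $o(1)$ errors: the projection onto that geodesic converging to $q$ (weak convergence) and $d(p_n,Tq)\leq d(p_n,q)+\e_n$ (nonexpansiveness plus $d(p_n,Tp_n)\to 0$). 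This is shorter and bypasses both the continuity-of-projections lemma and the equiconvexity machinery; what it costs is an explicit a priori bound $d(p_n,q)\leq R$, which is exactly where your cross terms $2R\d_n$ and $R\e_n$ come from. Your closing remark about arcs having $q$ as an endpoint is not a problem: the paper itself projects onto the (maximal) geodesic containing $q$ and $Tq$, so such arcs are admissible in the definition of weak convergence.

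The one step you should not assert without justification is ``since $(p_n)$ converges weakly it is bounded.'' With the definition used in this paper (projections onto every geodesic arc through $q$ converge to $q$), boundedness is not automatic in a general complete global NPC space: in an $\mathbb{R}$-tree obtained by gluing segments of length $n$ at a common point $q$, the far endpoints $p_n$ of distinct branches satisfy $\pi(p_n,\g)=q$ for every arc $\g$ through $q$ up to at most finitely many $n$, so $p_n\to q$ weakly while $d(p_n,q)=n\to\infty$. (In a Hilbert space the implication does hold, via Banach--Steinhaus, which is presumably what suggested it.) So as written your proof establishes the lemma under the additional hypothesis that $(p_n)$ is bounded. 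This is harmless in context: in every application in the paper the lemma is invoked for sequences lying in $\ol{co}(S_p)$, which are bounded, and in Jost's formulation of weak convergence (to which the paper defers) boundedness is built in. It is also only fair to note that the paper's own proof carries the same implicit dependence, since passing from the distance estimate~\eqref{dist_est} to the squared estimate $d^2(Tp_n,T(\pi(p_n)))>d^2(c_n,p_n)-\e^2/4$ uses that $d(c_n,p_n)+d(T(\pi(p_n)),Tp_n)$ stays bounded. Either add boundedness to your hypotheses or quote it from the intended definition of weak convergence; with that fixed, your proof is complete.
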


\begin{figure}
 \begin{center}
  \includegraphics[scale=0.6]{./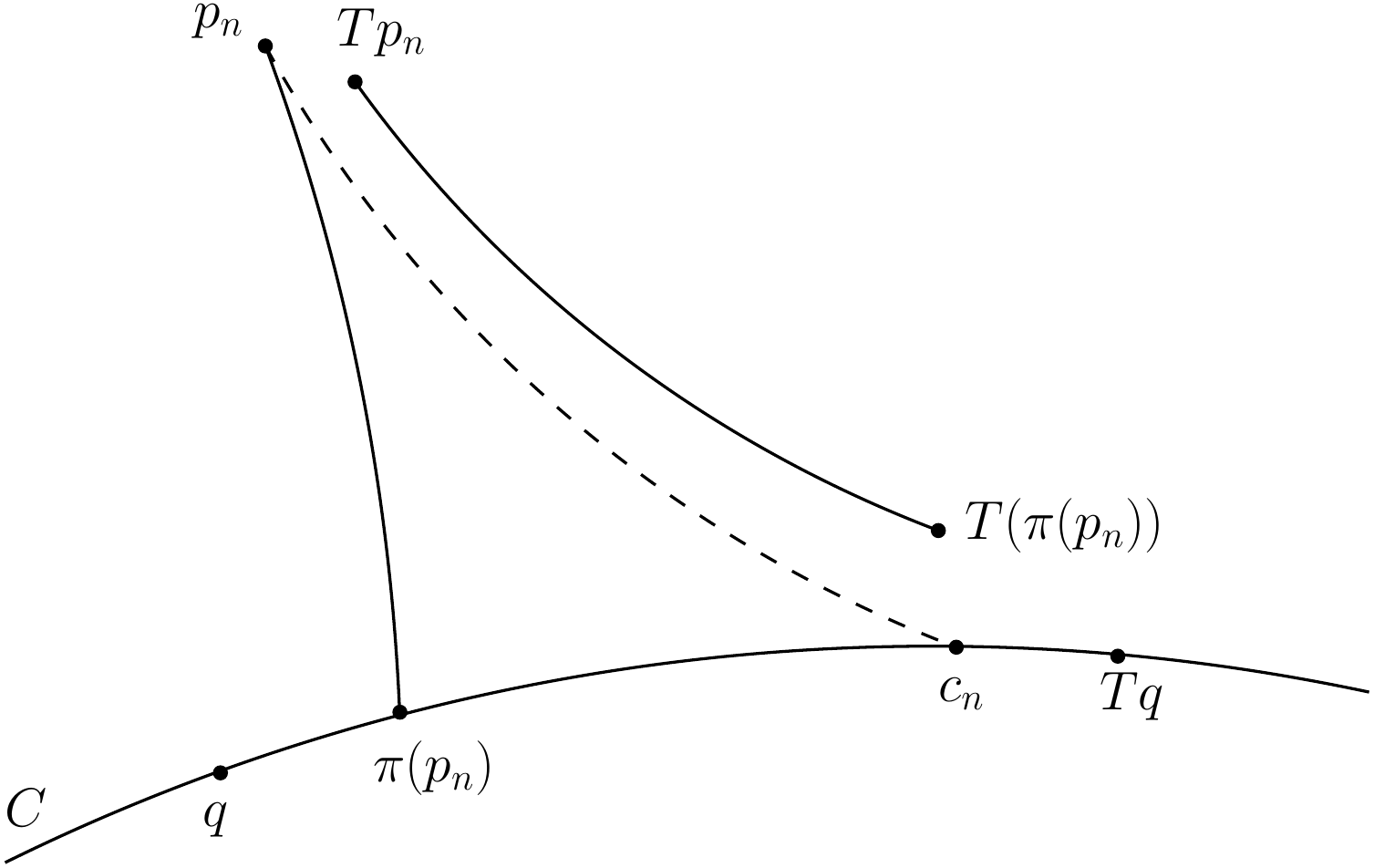}
\caption{\label{jotai} The dashed line is a geodesic arc connecting $p_n$ and $c_n$. The geodesic arc connecting $Tp_n$ and $T(\pi(p_n))$ has length at most $d(p_n,\pi(p_n))$ and it will approach the dashed line as $n$ tends to infinity.}
 \end{center}
\end{figure}
\begin{proof}
The claim follows from geometric considerations illustrated in Figure~\ref{jotai}. Let $C$ be the maximal geodesic containing $q$ and $Tq$ and $\pi:\N\to C$ the projection to this arc. Let us denote the point $\pi (T(\pi(p_n)))\in C$ by $c_n$ and let $\triangle_n$ be the geodesic triangle, whose vertices are $p_n, \pi(p_n)$ and $c_n$. We say that the angle between the geodesic arcs $\overrightarrow{\pi(p_n)p_n}$ and $\overrightarrow{\pi(p_n)c_n}$ is a right angle, because $\pi(p_n)$ minimizes the distance from $p_n$ to $C$ and since $\overrightarrow{\pi(p_n)c_n}\subset C$. We will show, in a sense which will be made precise, that, for any $n$ large enough, the angle between the geodesic arcs $\overrightarrow{c_n p_n}$ and $\overrightarrow{c_n\pi(p_n)}$ is arbitrarily close to a right angle. Thus, as in Euclidean geometry, we will be able to deduce that the length of the side $\overrightarrow{\pi(p_n)c_n}$ of the geodesic triangles $\triangle_n$ will tend to zero as $n\to\infty$. This observation will yield the claim $Tq=q$.

We have $c_n=\pi (T(\pi(p_n)))\to Tq$, $n\to\infty$, because $p_n$ converges weakly to $q$, $\pi$ and $T$ are continuous and $Tq\in C$. The sequences $(T\pi(p_n))$ and $(c_n)$ both converge to $Tq$. Thus
\begin{equation*}
 |d(T(\pi(p_n)),p_n)-d(c_n,p_n)|\to 0, \ n\to\infty.
\end{equation*}
Because $d(Tp_n,p_n)\to 0$, by assumption, we moreover have
\begin{equation}\label{dist_est}
 |d(T(\pi(p_n)),Tp_n)-d(c_n,p_n)|\to 0, \ n\to\infty.
\end{equation}

Let $\g_n$ be a geodesic with $\g_n(0)=\pi(p_n)$ and $\g_n(1)=c_n$. Since $\pi(p_n)$ and $c_n$ belong to $C$, $\g_n(t)\in C$ for all $t\in [0,1]$. The nonexpansiveness of $T$ yields an estimate
\begin{align*}
 d^2&(\g_n(\p),p_n)\geq \p d^2(\pi(p_n),p_n) + \p d^2(Tp_n,T(\pi(p_n)) \\
  &=\p d^2(\g_n(0),p_n) + \p d^2(Tp_n,T(\pi(p_n))
\end{align*}
Here we have also used the fact that $\pi(p_n)$ minimizes the distance from $p_n$ to $C$ and therefore also to the arc of $\g_n$. By the estimate~\eqref{dist_est}, we can choose an $N\in \mathbb{N}$ such that for all $n\geq N$ there holds
\begin{equation*}
 d^2(T(p_n),T(\pi(p_n)))> d^2(c_n,p_n)-\e^2/4=d^2(\g_n(1),p_n)-\e^2/4,
\end{equation*}
which shows that
\begin{equation*}
d^2(\g_n(\p),p_n)> \p d^2(\g_n(0),p_n)+ \p d^2(\g_n(1),p_n)-\e^2/4.
\end{equation*}
Now the equiconvexity of the family $\{d^2(\cdot,p): p\in \N\}$ of functions shows that the length of the geodesic arcs $\overrightarrow{\pi(p_n)c_n}$ of the triangles $\triangle_n$ will tend to zero,
\begin{equation*}
 d(\pi(p_n),c_n)<\e,
\end{equation*}
whenever $n\geq N$. Since $\pi(p_n)\to q$ and $c_n\to Tq$, we have shown that $Tq=q$.
\end{proof}


We are finally set up for the proof of our main theorem.
\begin{proof}[Proof of Theorem~\ref{main_thm}]
$(i)\To (ii)$: Let $\e>0$. Since by assumption $q\in \ol{co}\{p,Tp,T^2p,\ldots\}$, we can find a point $s\in co\{p,Tp,T^2p,\ldots\}$ such that
\begin{equation}\label{approximity}
 2D d(s,q)+ 3d^2(s,q)< \e^2/5,
\end{equation}
where $D$ is the finite diameter of the union of the convex hulls of the orbits $S_q$ and $S_s$ of $q$ and $s$. The fact that $D$ is finite follows from considerations similar to those in~\eqref{bded_orbits}. By Lemma~\ref{Tdc_impl_close}, we have
\begin{equation*}
 d(m_n(s),m_n(p))<\e,
\end{equation*}
whenever $n$ is large enough. 

Let us then estimate the distance
\begin{equation*}
 d(m_n(s),m_n(q)).
\end{equation*}
For this, we estimate the corresponding functions $F_n(\cdot,s)$ and $F_n(\cdot,q)$ (see Eq.~\ref{F_n}). Let $\d>0$. For fixed $r\in \ol{co}(S_q)\cup\ol{co}(S_s)$ we get the following estimate:
\begin{align*}
 F_n(r,s) & =\frac{1}{n}\sum_{i=0}^{n-1}d^2(r,T^i s)\leq \frac{1}{n}\sum_{i=0}^{n-1}(d(r,T^i q) + d(T^i q,T^i s))^2 \\
  & =\frac{1}{n}\sum_{i=0}^{n-1}\l(d^2(r,T^i q) + 2 d(r,T^iq)d(T^i q,T^i s)+d^2(T^i q,T^i s)\r)  \\
  & \leq \frac{1}{n}\sum_{i=0}^{n-1}\l(d^2(r,T^i q) + 2Dd(s,q) +d^2(s,q)\r) < F_n(r,q)+ \e^2/5. 
\end{align*}
Here we have used~\eqref{approximity} and the fact that $T$ is nonexpansive. A similar calculation shows that 
\begin{equation*}
 F_n(r,q)\leq F_n(r,s) + 2Dd(s,q)+3d^2(s,q)< F_n(r,s)+ \e^2/5.
\end{equation*}
Thus we have
\begin{equation*}
 \sup_{r\in \ol{co}(S_q)\cup\ol{co}(S_s)}|F_n(r,s)-F_n(r,q)|\leq \e^2/5<\e^2/4
\end{equation*}
and Lemma~\ref{uni_strc_appr} yields
\begin{equation*}
 d(m_n(s),m_n(q))<\e
\end{equation*}
for all $n$ large enough.

Combining the estimates above shows that
\begin{equation*}
d(q,m_n(p))=d(m_n(q),m_n(p))\leq d(m_n(q),m_n(s))+d(m_n(s),m_n(p))<2\e,
\end{equation*}
whenever $n$ is large enough. Here we have used the assumption $q=Tq$ to deduce $q=m_n(q)$.

$(ii)\To (iii)$: Strong convergence implies weak convergence by the continuity of the projections. $(iii)\To (iv)$: This is obvious.

$(iv)\To (i)$: First we show that the weak cluster point of the sequence $(m_n(p))$ belongs to $\ol{co}\{p,Tp,T^2p,\ldots\}=\ol{co}(S_p)$. Since $q$ is a weak cluster point of a sequence $(m_n(p))$, there exists a subsequence $(m_{n_k}(p))$ converging weakly to $q$ as $k\to\infty$. The sequence $(m_{n_k}(p))$ belongs to $\ol{co}(S_p)$ and in particularly is bounded~\cite[Lemma 3.3.4]{Jostb}. By the version of Mazur's lemma by Jost~\cite[Thm. 2.2]{Jost}, the bounded sequence $(m_{n_k}(p))$ contains a subsequence such that its mean values converge to $q$. But now the mean values of elements of any subsequence of $(m_{n_k}(p))$ belong to the closed set $\ol{co}\{m_{n_k}(p): k\in \mathbb{N}\}\subset \ol{co}(S_p)$. Thus $q\in\ol{co}(S_p)$.

It remains to prove that $q=Tq$. For this let $\e>0$. We use the nonexpansivity of $T$ to show first that, for all $n$ large enough,
\begin{equation*}
 d(Tm_n(p),m_n(p))<\e.
\end{equation*}
We have
\begin{align*}
 F_n(Tm_n(p),p)&=\frac{1}{n}\sum_{i=0}^{n-1}d^2(Tm_n(p),T^ip)\leq \frac{1}{n}\sum_{i=0}^{n-1}d^2(m_n(p),T^ip) \\ 
  &+\frac{1}{n}d^2(T m_n(p),p)-\frac{1}{n}d^2(m_n(p),T^{n-1}p)= F_n(m_n(p),p) \\
  &+\frac{1}{n}d^2(Tm_n(p),p)-\frac{1}{n}d^2(m_n(p),T^{n-1}p).
\end{align*}
By the boundedness of the orbit of $p$, we can choose $N\in\mathbb{N}$ such that, for all $n\geq N$,
\begin{equation*}
 F_n(m_n(p),p)> F_n(Tm_n(p),p)-\e^2/2.
\end{equation*}
For a geodesic $\g:[0,1]\to \N$ with $\g(0)=m_n(p)$ and $\g(1)=Tm_n(p)$, we now have
\begin{equation*}
 F_n(\gamma(\p),p)\geq F_n(m_n(p),p)> \p F_n(\g(0),p) + \p F_n(\g(1),p) -\p \e^2/2.
\end{equation*}
Here the first inequality follows from the fact that $m_n(p)$ is the minimizer of the function $F_n(\cdot,p)$. The equiconvexity of the family of functions $\{F_n(\cdot,p)\}$ now yields
\begin{equation*}
 d(Tm_n(p),m_n(p))<\e
\end{equation*}
whenever $n$ is large enough. 

We have seen that the sequence $(m_{n_k}(p))$ converges weakly to $q$ and 
\begin{equation*}
 d(Tm_{n_k}(p),m_{n_k}(p))\to 0, \ k\to \infty.
\end{equation*}
Thus, by Lemma~\ref{cond_for_fpoint}, we deduce that $Tq=q$.
\end{proof}

From the proof of the theorem we can see that the distance convexity of $T$ was only used to prove (via Lemma~\ref{Tdc_impl_close}) that the means converge strongly to the fixed point. We use this observation to prove Theorem~\ref{fp_thm}.

\begin{proof}[Proof of Theorem~\ref{fp_thm}]
Let $p\in \N$ have bounded orbit. By Theorem 2.1 of~\cite{Jost}, the bounded sequence $(m_n(p))$ contains a subsequence converging weakly to some element $q\in \N$. Thus $q$ is a weak cluster point of the sequence $(m_n(p))$. Now, the exact same argument as in the part $(iv)\To (i)$ of the proof of Theorem~\ref{main_thm} concludes the proof.
\end{proof}

\section{Fixed point for volumorphisms on the space of Riemannian metrics with fixed volume form}
In this section we apply the fixed point theorem to volumorphisms acting on the space of all Sobolev Riemannian metrics having a fixed volume form. We give a natural condition for a fixed point to exist if we allow that the fixed point satisfies only mild regularity assumptions.  We begin by describing the space of Riemannian metrics with a fixed volume form. We refer to~\cite{Ebin,Clarke} for basic results and properties of this space and to~\cite{Freed} for calculations of its geodesics and curvature.

Let $M$ be a smooth compact oriented finite dimensional manifold. We denote by $\M^s$ the set of all Riemannian metrics on $M$, which are of Sobolev class $H^s$. Throughout this section, we will assume $s>n/2$. The space $\M^s$ is an infinite dimensional manifold with a weak Riemannian structure given by
\begin{equation*}
  \langle U,V \rangle_g=\int_M \tr{g^{-1}Ug^{-1}V}dV_g,
\end{equation*}
where $U,V$ are tangent vectors at $g\in\M^s$ and $dV_g$ is the volume form induced by $g$. Tangent vectors of $\M^s$ are symmetric $(0,2)$-tensor fields of Sobolev class $H^s$.

Let $\mu$ be a volume form on $M$. Consider next the subset $\M_\mu^s$ of $\M^s$ consisting of the elements of $\M^s$ whose induced volume form is $\mu$. This subset is an infinite dimensional submanifold of $\M^s$ with the induced inner product
\begin{equation*}
 \langle U,V \rangle_g=\int_M \tr{g^{-1}Ug^{-1}V}d\mu.
\end{equation*}
Here $U$ and $V$, tangent vectors at the point $g$, are traceless (with respect to $g$) symmetric $(0,2)$-tensor fields.  

The geodesics of $\M_\mu^s$ can be given explicitly:
\begin{equation*}\label{can_path}
 g(t)=g\exp{t(g^{-1}A)}.
\end{equation*}
Here $g(0)=g\in \M_\mu^s$ and $\dot{g}(0)=A\in T_g\M_\mu^s$. The geodesic $g(t)$ is of constant speed
\begin{equation*}
 ||\dot{g}(t)||^2_{g(t)}=\int_M\tr{(g^{-1}A)^2}d\mu.
\end{equation*}
Geodesics $g(t)$ exist for all times $t$ and we can easily see that the unique solution $A$ of the equation
\begin{equation*}
 g(t)|_{t=1}=h
\end{equation*}
is
\begin{equation*}
 A=g\log{(g^{-1}h)}.
\end{equation*}
Thus the distance, at least formally, is given by
\begin{equation*}
 d(g,h)=\int_0^1 ||\dot{g}(t)||_{g(t)}dt=\l(\int_M\tr{(g^{-1}A)^2}d\mu\r)^{1/2}.
\end{equation*}
That is,
\begin{equation}\label{P_distance}
 d^2(g,h)=\int_M\tr{(\log{(g^{-1}h)})^2}d\mu.
\end{equation}

The calculation above is formal in the sense that for general infinite dimensional weak Riemannian manifolds it is nontrivial how geodesics relate to the distance function. However, in our case, the exponential mapping $T_g\M_\mu^s\to \M_\mu^s$ is a diffeomorphism onto $\M_\mu^s$ for any $g\in \M_\mu^s$, and therefore the distance between $g$ and $h$ is given by the norm of the tangent vector $A=g\log{(g^{-1}h)}\in T_g\M_\mu^s$ justifying~\eqref{P_distance}. See~\cite[Prop. 2.23, Prop. 2.46]{Clarke} for details. 

We record that $\M_\mu^s$ is indeed a global Alexandrov NPC space. We omit the proof since it is essentially the same as the proof of Theorem~\ref{X} below.
\begin{thm}\label{M_mu_NPC}
 The space $\M_\mu^s$ of Riemannian metrics of Sobolev class $H^s$, $s>n/2$, with a fixed volume form $\mu$ on a compact orientable manifold is a global Alexandrov NPC space.
\end{thm}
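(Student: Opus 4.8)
### Proof Proposal for Theorem~\ref{M_mu_NPC}

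The plan is to verify directly the three defining properties of a complete global Alexandrov NPC space for $\M_\mu^s$ using the explicit description of its geodesics and distance function recorded above. The essential input is the formula~\eqref{P_distance}, namely $d^2(g,h)=\int_M\tr{(\log(g^{-1}h))^2}d\mu$, together with the fact that the exponential map $T_g\M_\mu^s\to\M_\mu^s$ is a diffeomorphism, so that geodesics are exactly the paths $g(t)=g\exp{t(g^{-1}A)}$ and each pair of points is joined by a unique such geodesic realizing the distance. Since the metric is a pointwise integral, the strategy is to reduce everything to a pointwise statement about the space of symmetric positive-definite matrices with fixed determinant, $SL(n)/SO(n)$, which is the classical model of a (finite-dimensional) Riemannian symmetric space of nonpositive curvature, and then integrate.

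First I would fix $g\in\M_\mu^s$ and observe that via the substitution $h\mapsto g^{-1}h$ the pointwise fiber is identified with symmetric positive-definite $n\times n$ matrices $P$ with $\det P$ equal to the fixed ratio of volume forms; modding out the conformal factor, this is $SL(n)/SO(n)$ with its canonical metric $\tr{(\log P)^2}$ at the basepoint, which has nonpositive sectional curvature and, being a Hadamard manifold, satisfies the Alexandrov NPC (CAT(0)) inequality pointwise. Next I would check that $\M_\mu^s$ is a geodesic length space: given $g,h$, the path $g(t)=g\exp{t(g^{-1}A)}$ with $A=g\log(g^{-1}h)$ has constant speed $\bigl(\int_M\tr{(g^{-1}A)^2}d\mu\bigr)^{1/2}=d(g,h)$, hence length $d(g,h)$, so it is a geodesic in the metric sense; uniqueness follows from the diffeomorphism property of the exponential map, already cited via~\cite[Prop.~2.23, Prop.~2.46]{Clarke}.

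The heart of the argument is the Alexandrov NPC inequality. Given $p,q,r\in\M_\mu^s$ and the geodesic $\g$ from $p$ to $r$, the point $\g(t)$ is described pointwise (in the coordinates above) by the corresponding pointwise geodesic in $SL(n)/SO(n)$ from the value of $p$ to the value of $r$ at each point of $M$. The finite-dimensional CAT(0) inequality holds pointwise on $M$:
\begin{equation*}
 d_x^2(q,\g(t))\leq (1-t)d_x^2(q,p)+t\,d_x^2(q,r)-t(1-t)\,l_x(\g)^2,
\end{equation*}
where $d_x$ and $l_x$ denote the pointwise (integrand) quantities. Integrating this inequality in $x$ over $(M,\mu)$, using that $d^2=\int_M d_x^2\,d\mu$ and $l(\g)^2=\int_M l_x(\g)^2 d\mu$ (the latter because $\g$ has constant speed, so its squared length is the integral of the squared pointwise speeds and the Cauchy--Schwarz slack vanishes), yields the desired global inequality. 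One must be a little careful that the pointwise geodesic interpolation genuinely agrees, $\mu$-a.e., with the evaluation of the global geodesic $\g$; this is immediate from the explicit matrix-exponential formula. Finally, completeness of $\M_\mu^s$ as a metric space follows since geodesics exist for all time and the exponential map is a diffeomorphism, so Cauchy sequences correspond (under $\E_g^{-1}$) to Cauchy sequences in the tangent space $T_g\M_\mu^s$, which is a closed subspace of an $H^s$-Sobolev space and hence complete; alternatively one invokes~\cite{Clarke} directly.

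I expect the main obstacle to be the rigorous justification of the pointwise-to-global reduction in the NPC inequality: one must confirm that the unique metric geodesic $\g$ between $p$ and $r$ really is given, $\mu$-almost everywhere, by the fiberwise geodesic, and that the pointwise CAT(0) estimate is measurable and integrable in $x$ (so that integration is legitimate), which requires a modicum of care with the Sobolev regularity $s>n/2$ guaranteeing continuous representatives and with the positive-definiteness holding uniformly on the compact $M$. Once this measurability and the identification of geodesics are in place, the rest is a direct integration of the classical finite-dimensional inequality. This is exactly why the author remarks that the proof is essentially the same as that of the more delicate Theorem~\ref{X}, where the analogous reduction must be carried out for the larger, genuinely completed space $X$.
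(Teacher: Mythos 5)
The core of your argument---identifying the integrand of $d^2$ with the squared distance on $S=SL(n,\mathbb{R})/SO(n,\mathbb{R})$, checking that the explicit paths $g\exp{t\,g^{-1}A}$ realize the distance, and integrating the pointwise CAT(0) inequality over $(M,\mu)$---is exactly the route the paper takes: it omits the proof of this theorem and refers to the proof of Theorem~\ref{X}, which proceeds in precisely this way and closes the argument with simple connectedness and \cite[Cor.~2.3.2]{Jostb}. One technical remark: the NPC inequality must hold along \emph{every} metric geodesic, so uniqueness of metric geodesics should be obtained from the comparison inequality itself (or from Jost's local-to-global corollary), not from the fact that the Riemannian exponential map is a diffeomorphism, which a priori controls only Riemannian geodesics rather than all length-minimizing paths.

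The genuine error is your final completeness claim. The theorem does not assert completeness, and in fact $(\M_\mu^s,d)$ is \emph{not} metrically complete---the paper states this explicitly, and this incompleteness is the entire reason for constructing the larger space $(X,\d)$ in Theorem~\ref{X}. Your argument fails because $d$ is an $L^2$-type distance while the manifold and its tangent spaces carry the $H^s$ topology: a $d$-Cauchy sequence need not be Cauchy in any $H^s$ norm, the correspondence with tangent vectors under the exponential map is not an isometry for the relevant norms, and the $L^2$-type inner product on the space of $H^s$ tensor fields is itself incomplete, so the tangent space is not a complete metric space in the norm that matters. Invoking \cite{Clarke} ``directly'' gives the opposite conclusion: that work characterizes the metric completion of $\M^s$ precisely because these spaces fail to be complete. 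Deleting the completeness paragraph leaves a proof of the stated result along essentially the paper's own lines.
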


A volumorphism is a diffeomorphism preserving a given volume form. We denote by $\mathcal{D}_\mu^{s+1}$ the space of Sobolev $H^{s+1}$ volumorphisms on $M$. The natural action of $\mathcal{D}_\mu^{s+1}$ on $\M_\mu^s$ is given by pullback. A straightforward calculation shows that the action is actually an isometry in the sense of Riemannian geometry and Eq.~\ref{P_distance} shows that it is also an isometry in the metric sense. In particular, the action is nonexpansive and we are, almost, in the setup of our fixed point Theorem~\ref{fp_thm}.

The last needed assumption to apply Theorem~\ref{fp_thm} would be the completeness of $\M_\mu^s$ as a metric space $(\M_\mu^s,d)$. However, $(\M_\mu^s,d)$ is not metrically complete, which is quite expected since we are in a sense considering an $L^2$ inner product in the subset of Sobolev $H^s$ Riemannian metrics on $M$.

The metric completion of the manifold of all Riemannian metrics $\M^s$ with respect to its distance metric is characterized in~\cite{Clarke}. The elements of the completion of $\M^s$ can be identified with measurable semimetrics with finite volume. See~\cite[Thm 5.25.]{Clarke} for details. As $\M_\mu^s$ is a submanifold of $\M^s$, it follows that the $d$-metric completion $\ol{\M}_\mu^s$ of $\M_\mu^s$ is a subset of the metric completion of $\M^s$. The next theorem implies that actually more is true. 

\begin{thm}\label{X}
 Let $\mu$ be a volume form on a smooth oriented compact manifold $M$. Let $X$ be the set of $\mu$-measurable a.e. positive definite symmetric $(0,2)$-tensor fields $g$ on $M$ with volume form agreeing with $\mu$ a.e. and
\begin{equation*}
 \int_M\tr{(\log{(g^{-1}h)})^2}d\mu <\infty,
\end{equation*}
for all $h\in \M_\mu^s$. Then, if $X$ is equipped with the metric
\begin{equation}\label{metric}
 \d(g,h)=\l(\int_M\tr{(\log{(g^{-1}h)})^2}d\mu\r)^{1/2},
\end{equation}
$(X,\d)$ is a complete global Alexandrov NPC space.

The geodesics of $(X,\d)$ are given by the formula
\begin{equation}\label{olP_geo}
 g(t)=g \exp{tg^{-1}A}.
\end{equation}
Here $g\in X$ and $A$ belongs to the set of $(L^2,|\cdot|_g,\mu)$-integrable symmetric $(0,2)$-tensor fields on $M$ satisfying $\Trg{A}{g}=0$ a.e. Moreover, the space of mappings $\mathcal{D}_\mu^{s+1}$ acts by pullback isometrically on $(X,\d)$.
\end{thm}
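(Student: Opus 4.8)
The plan is to realize $(X,\delta)$ as an $L^2$-type space of measurable sections of a bundle whose fibres are the finite-dimensional symmetric space of unimodular inner products, and thereby to reduce every assertion to classical properties of that symmetric space together with routine measure theory. For $x\in M$, let $P_x$ be the set of positive definite symmetric $(0,2)$-tensors on $T_xM$ whose induced volume element is $\mu_x$, equipped with the distance $d_x(a,b)=\bigl(\tr{(\log{(a^{-1}b)})^2}\bigr)^{1/2}$. A choice of basis of $T_xM$ identifies $(P_x,d_x)$ isometrically with the Riemannian symmetric space $SL(n,\R)/SO(n)$; in particular each $(P_x,d_x)$ is a Hadamard manifold, hence a complete global Alexandrov NPC space, the geodesic of $P_x$ issuing from $a$ with velocity $B$ is $t\mapsto a\exp{ta^{-1}B}$, it has constant speed $(\tr{(a^{-1}B)^2})^{1/2}$ and realizes $d_x$, and the unique geodesic from $a$ to $b$ corresponds to $B=a\log{(a^{-1}b)}$. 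With this language an element of $X$ is exactly a $\mu$-measurable section $x\mapsto g(x)\in P_x$ obeying the stated integrability condition, and $\delta^2(g,h)=\int_M d_x^2(g(x),h(x))\,d\mu$.

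First I would verify that $\delta$ is a genuine finite metric. Because $s>n/2$, any $g\in\M_\mu^s$ is continuous and, relative to any other element of $\M_\mu^s$, has eigenvalues bounded away from $0$ and $\infty$ on the compact manifold $M$; hence $\M_\mu^s\subset X$ and, by~\eqref{P_distance}, $\delta$ restricted to $\M_\mu^s$ coincides with $d$. For arbitrary $g,h\in X$, fix a reference metric $g_0\in\M_\mu^s$; the pointwise triangle inequality in $P_x$ combined with Minkowski's inequality in $L^2(M,\mu)$ gives $\delta(g,h)\le\delta(g,g_0)+\delta(g_0,h)$, and the right-hand side is finite precisely by the defining integrability condition of $X$. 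The same two inequalities give the triangle inequality and symmetry of $\delta$, while $\delta(g,h)=0$ forces $g(x)=h(x)$ for $\mu$-a.e.\ $x$, so $\delta$ is a metric.

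Next I would pin down the geodesics and the Alexandrov inequality. Given $g\in X$ and $A$ in the class named in the theorem, the curve $g(t)=g\exp{tg^{-1}A}$ is at each $x$ the $P_x$-geodesic from $g(x)$ with velocity $A(x)$; the triangle inequality together with $\int_M\tr{(g^{-1}A)^2}\,d\mu<\infty$ shows $g(t)\in X$ for every $t$, and the pointwise constant-speed property integrates to $\delta(g(s),g(t))=|s-t|\,\delta(g(0),g(1))$, so $g(t)$ is a minimizing constant-speed geodesic of length $\delta(g(0),g(1))$. Conversely, for any $g,h\in X$ the field $A=g\log{(g^{-1}h)}$ lies in the admissible class: its $g$-norm squared integrates to $\delta^2(g,h)<\infty$, and $\Trg{A}{g}=\log\det{g^{-1}h}=0$ a.e.\ because the volume elements of $g$ and $h$ both equal $\mu$, while $g\exp{g^{-1}A}=h$. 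Hence $(X,\delta)$ is a geodesic length space. For a geodesic $\gamma$ of this form and any $q\in X$, integrating over $M$ the NPC inequality valid in each fibre $P_x$ and using $\int_M d_x^2(\gamma(0)(x),\gamma(1)(x))\,d\mu=\delta^2(\gamma(0),\gamma(1))=l(\gamma)^2$ yields the Alexandrov NPC inequality for $(X,\delta)$; uniqueness of geodesics then shows that~\eqref{olP_geo} gives all geodesics. Finally, for $\phi\in\mathcal{D}_\mu^{s+1}$ the substitution $\int_M(f\circ\phi)\,d\mu=\int_M f\,d\mu$, valid since $\phi$ preserves $\mu$, gives $\delta(\phi^*g,\phi^*h)=\delta(g,h)$, and writing a reference metric $h_0\in\M_\mu^s$ as $\phi^*((\phi^{-1})^*h_0)$ with $(\phi^{-1})^*h_0\in\M_\mu^s$ (pullback by an $H^{s+1}$ diffeomorphism preserves $\M_\mu^s$) shows $\phi^*g\in X$, so $\mathcal{D}_\mu^{s+1}$ acts by isometries.

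The hard part is completeness. Given a $\delta$-Cauchy sequence, I would pass to a subsequence $(g_k)$ with $\delta(g_k,g_{k+1})<2^{-k}$; Minkowski's inequality and monotone convergence then show $\sum_k d_x(g_k(x),g_{k+1}(x))$ belongs to $L^2(M,\mu)$, hence is finite for $\mu$-a.e.\ $x$. At such $x$ the sequence $(g_k(x))$ is Cauchy in the complete space $P_x$, so it converges there to some $g_\infty(x)\in P_x$; the resulting a.e.\ pointwise limit $g_\infty$ is $\mu$-measurable, a.e.\ positive definite, and has volume element $\mu$ a.e. Applying Fatou's lemma to $d_x^2(g_\infty(x),h(x))=\lim_k d_x^2(g_k(x),h(x))$ with $h\in\M_\mu^s$ shows $g_\infty\in X$, and applying it to $d_x^2(g_k(x),g_\infty(x))\le\liminf_m d_x^2(g_k(x),g_m(x))$ shows $\delta(g_k,g_\infty)\to0$; a Cauchy sequence with a convergent subsequence converges, so the original sequence converges to $g_\infty$. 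This fibrewise-then-integrate scheme is the adaptation to the present bundle setting of the standard construction of $L^2$-spaces with complete NPC targets; the point that requires genuine care, both here and in the earlier steps, is the repeated bookkeeping of the finiteness and integrability conditions, which one must check for the limit and for every intermediate comparison object in order to stay inside $X$.
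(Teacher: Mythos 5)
Your proposal is correct and follows essentially the same route as the paper: reduce everything fibrewise to the symmetric space $SL(n,\R)/SO(n)$, get the metric via Minkowski's inequality, obtain geodesics and the NPC inequality by integrating the pointwise statements, and prove completeness by the standard Riesz--Fischer scheme (summable subsequence, a.e.\ pointwise convergence in the complete fibre, Fatou). Your write-up is in places slightly more explicit than the paper's (e.g.\ checking $\phi^*g\in X$ and that $\delta(g_k,g_\infty)\to 0$), but the argument is the same.
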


\begin{proof}
We first show that $X$ equipped with the mapping $\d:X\times X\to [0,\infty)$ is a metric space. This follows from noting that the integrand in the formula~\eqref{metric} of the metric $\d$ is, pointwise in any local coordinates, the square of the distance of matrices in a space isometric to $S:=SL(n,\mathbb{R})/SO(n,\mathbb{R})$. This is because all elements of $X$ have the same volume form a.e., and therefore the coordinate representations of the elements of $X$ are pointwise positive definite symmetric matrices with \emph{equal} determinants a.e.

The observation above yields the following. If $\d(g,h)=0$, then the integrand of~\eqref{metric} equals zero a.e. and consequently $g=h$ a.e. The triangle inequality, and the fact that $\d$ is finite, follow from the Minkowski inequality for $L^2(M,\mu)$ and the triangle inequality on $(S,d_S)$. We conclude that $(X,\d)$ is a metric space. We also see that $\mathcal{D}_\mu^{s+1}$ acts isometrically on $X$.

The metric $d_S$ of $S$ is given by
\begin{align*}
 d^2_S(G,H)&=||\log (H^{1/2}G^{-1}H^{1/2})||^2=\tr{\l(\log(H^{1/2}G^{-1}H^{1/2})\r)^2} \\
  &=\tr{(\log(G^{-1}H))^2}=\tr{(\log(H^{-1}G))^2}
\end{align*}
for $G,H\in S$, see~\cite[Ch. 20]{Iwaniec}. A straightforward calculation shows that a path $\Gamma:\R \to S$ of the form
\begin{equation}\label{S_geo}
 \Gamma(t)=G\exp{t\log{(G^{-1}H)}}
\end{equation}
has on the interval $[0,1]$ the length $l(\Gamma)$,
\begin{equation*}
 l(\Gamma):=\sup\l\{\sum_{i=0}^n d_S(\Gamma(t_i),\Gamma(t_{i-1})): 0=t_0<t_1<\cdots < t_n=1, n\in \mathbb{N} \r\},
\end{equation*}
equal to $d_S(G,H)$. This implies that paths of the form~\eqref{S_geo} are geodesics in $S$. As a Riemannian manifold, $S$ is a complete globally symmetric space of nonpositive sectional curvature and therefore a complete global Alexandrov NPC space, see~\cite[p.11-18, p.55]{Jostb}.

Similarly, if we calculate the length of a path
\begin{equation*}
 \g(t)=g \exp{t\log{(g^{-1}h)}}
\end{equation*}
on an interval $[0,1]$, we see that it equals $\d(g,h)$. Thus, there is a geodesic connecting any two points in $X$. The NPC inequality for $(X,\d)$ is inherited from $(S,d_S)$ via integration of the NPC inequality on $(S,d_S)$. Since $(X,\d)$ is simply connected, it follows that $(X,\d)$ is actually a global Alexandrov NPC space~\cite[Cor. 2.3.2]{Jostb}. More generally we see that paths of the form~\eqref{olP_geo} are geodesics in $(X,\d)$. 

It remains to prove that $(X,\d)$ is complete. The proof of this fact is analogous to showing that $L^p$ spaces are complete, cf.~\cite[Ch. 7.3]{Royden}. For this, let $(g_n)$ be a Cauchy sequence in $(X,\d)$. There is a subsequence of $(g_n)$, which we still denote by $(g_n)$, such that
\begin{align*}
 \sum_{n=0}^{\infty}\d(g_{n},g_{n+1})=s<\infty.
\end{align*}
Thus, by Fatou's lemma and by the Cauchy-Schwarz inequality for $L^2(M,\mu)$, we have that
\begin{align*}
 \int_M &\sum_{n=0}^{\infty}\l(\tr{(\log(g_n^{-1}g_{n+1}))^2}\r)^{1/2}d\mu \\
  &\leq \liminf_{k\to\infty}\sum_{n=0}^{k}\int_M \l(\tr{(\log(g_n^{-1}g_{n+1}))^2}\r)^{1/2}d\mu  \\
  &\leq \liminf_{k\to\infty} \sum_{n=0}^{k} \l( \int_M \tr{(\log(g_n^{-1}g_{n+1}))^2}d\mu \r)^{1/2}\mbox{Vol}_\mu(M)^{1/2} \\
  &=\sum_{n=0}^{\infty}\d(g_n,g_{n+1}) \mbox{Vol}_\mu(M)^{1/2}<\infty.
\end{align*}
Therefore, the integrand $\sum_{n=0}^{\infty}\l(\tr{(\log(g_n^{-1}g_{n+1}))^2}\r)^{1/2}$ above is finite a.e. 

Let $\e>0$. It follows from what we observed that, for a.e. $x\in M$, there is an index $N=N_x\in \mathbb{N}$ such that
\begin{align*}
 d_S&(g_{n}(x),g_{n+k}(x))\leq \sum_{j=n}^{n+k-1}d_S(g_{j}(x),g_{j+1}(x)) \leq \sum_{j=n}^{\infty}d_S(g_{j}(x),g_{j+1}(x)) \\
  &=\sum_{j=n}^{\infty}\l(\tr{(\log(g_j^{-1}(x)g_{j+1}(x)))^2}\r)^{1/2}<\e
\end{align*}
whenever $n\geq N_x$ and for all $k\in \mathbb{N}$. Thus, $(g_n(x))$ is Cauchy a.e. By the metric completeness of $(S,d_S)$, it follows that, for a.e. $x\in M$, the sequence $g_{n}(x)$ tends to some $g(x)$. 

The limit is a symmetric positive semi-definite $(0,2)$ tensor field $g$, defined a.e., and has volume form equalling $\mu$ a.e. The a.e. positiveness of the volume form $\mu$ implies that $g$ is actually positive definite a.e. The fact that $\d(g,h)<\infty$, for all $h\in \M_\mu^s$, follows from Fatou's lemma and the boundedness of Cauchy sequences. We also have $\d(g_n,g)\to 0$ as $n\to\infty$.
\end{proof}

By the discussion of this section we have that $(\M_\mu^s,d)$ is isometrically embedded in $(X,\d)$. It is reasonable to expect that actually $(X,\d)$ is the metric completion of $(\M_\mu^s,d)$. The proof of this, which is equivalent to proving that $\M_\mu^s$ is dense in $(X,\d)$, does not seem to be straightforward however.

We have the main theorem of this section.
\begin{thm}\label{vol_fp}
Let $(X,\d)$ be as in Theorem~\ref{X}. If the action of a volumorphism $\phi\in\mathcal{D}_\mu^{s+1}$ has a bounded orbit in $(X,\d)$ for some $p\in X$, then there exists a fixed point $g$ for the action in the $\d$-closure of the subset $co\{p,Tp,T^2p,\ldots\}$ of $X$. With respect to this fixed point $\phi$ is a Riemannian isometry,
\begin{equation*}
 \phi^*g=g.
\end{equation*}
\end{thm}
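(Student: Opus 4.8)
The plan is to read Theorem~\ref{vol_fp} off as an immediate application of the fixed point Theorem~\ref{fp_thm} to the pullback action on the space $(X,\d)$ constructed in Theorem~\ref{X}; the point is that all the genuinely hard work has already been done.

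First I would fix the volumorphism $\phi\in\mathcal{D}_\mu^{s+1}$ and let $T=\phi^{*}\colon X\to X$ be the induced pullback map, so that the orbit appearing in the statement is exactly $\{p,Tp,T^{2}p,\dots\}$. By Theorem~\ref{X}, $\mathcal{D}_\mu^{s+1}$ acts by pullback isometrically on $(X,\d)$; in particular $T$ is a well-defined bijective isometry of $(X,\d)$, hence nonexpansive, and it maps $X$ into itself. (The integrability needed for $\phi^{*}g\in X$, namely $\d(\phi^{*}g,h)<\infty$ for all $h\in\M_\mu^{s}$, follows from $\d(\phi^{*}g,h)=\d(g,(\phi^{-1})^{*}h)$ together with $(\phi^{-1})^{*}h\in\M_\mu^{s}$, which holds since $s>n/2$ makes $H^{s}$ an algebra stable under composition with $H^{s+1}$ maps, and $g\in X$.) Also by Theorem~\ref{X}, $(X,\d)$ is a complete global Alexandrov NPC space, and by hypothesis the orbit of $p$ under $T$ is bounded.

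All hypotheses of Theorem~\ref{fp_thm} are now in force, so it produces a point $g\in\ol{co}\{p,Tp,T^{2}p,\dots\}$ with $Tg=g$, i.e.\ $\phi^{*}g=g$. Since $g\in X$, it is a $\mu$-measurable, a.e.\ positive definite symmetric $(0,2)$-tensor field whose volume form agrees with $\mu$ a.e., and $\phi\in H^{s+1}\subset C^{1}$ (again as $s>n/2$); hence the identity $\phi^{*}g=g$ is the honest statement that $\phi$ is a Riemannian isometry of $(M,g)$. This is precisely the assertion of the theorem.

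I do not expect a real obstacle here: the substantive content — completeness of $(X,\d)$, the NPC inequality, and the fact that the pullback preserves $X$ and acts isometrically — is Theorem~\ref{X}, and the abstract fixed point argument is Theorem~\ref{fp_thm}. The only points deserving a sentence of care are the verification (sketched above) that $T$ maps $X$ to $X$, and the routine remark that, once $g$ is identified as an a.e.-defined tensor field, $\phi^{*}g=g$ is genuinely the isometry condition. One could also note in passing that if $\phi^{*}$ were additionally distance convex on $(X,\d)$, Theorem~\ref{main_thm} would upgrade the conclusion to strong convergence of the Riemannian means $m_{n}(p)$ to the optimal metric $g$, but this is not required for the present statement.
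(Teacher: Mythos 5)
Your proposal is correct and coincides with the paper's (implicit) argument: the paper gives no separate proof of Theorem~\ref{vol_fp} precisely because it follows by applying Theorem~\ref{fp_thm} to $T=\phi^*$ acting on the complete global Alexandrov NPC space $(X,\d)$ furnished by Theorem~\ref{X}, which already asserts that the pullback action of $\mathcal{D}_\mu^{s+1}$ on $(X,\d)$ is isometric and hence nonexpansive. Your additional check that $\phi^*$ preserves the integrability condition defining $X$ is a sensible point of care that the paper subsumes into that statement of Theorem~\ref{X}.
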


\subsection*{Acknowledgements}

The author is supported by the Finnish National Graduate School in Mathematics and its Applications. This work was inspired by the international scientific workshop in honor of Steven Rosenberg's 60th birthday. The author also wishes to thank the referee for valuable comments and interest in this work.

\providecommand{\bysame}{\leavevmode\hbox to3em{\hrulefill}\thinspace}
\providecommand{\href}[2]{#2}

\end{document}